\documentclass[11pt]{amsart}
\pdfoutput=1

% make colors and opacity work properly (in acrobat reader)
\pdfpageattr{/Group <</S /Transparency /I true /CS /DeviceRGB>>}

\usepackage[T1]{fontenc}
\usepackage[english]{babel}
\usepackage{amsfonts,amsmath,amssymb,amsthm,mathrsfs}
\usepackage{tikz}
\usetikzlibrary{calc}
\usepackage{xspace}
\usepackage{marvosym}
\usepackage{microtype}
\usepackage{mathtools}
\usepackage[a4paper]{geometry}
\usepackage{hyperref}
\usepackage{float}

%% Microtype settings
\microtypesetup{tracking,kerning,spacing}
\microtypecontext{spacing=nonfrench}

%% TikZ settings
\usetikzlibrary{calc,3d,shapes,fit,arrows}
\usetikzlibrary{decorations.markings}
%\pgfplotsset{compat=1.12}
% length for tikz pictures

\tikzset{VertexStyle/.style = {
    circle, draw,
%    ball color     = blue!20,
    text           = black,
    inner sep      = 2pt,
    outer sep      = 0pt,
    minimum size   = 20 pt}}

\tikzset{BlackDot/.style = {
    VertexStyle,
    fill = black}}
\tikzset{WhiteDot/.style = {
    VertexStyle,
    fill = white}}
\tikzset{RedDot/.style = {
    VertexStyle,
    fill = red}}
\tikzset{GreenDot/.style = {
    VertexStyle,
    fill = green}}
\tikzset{BlueDot/.style = {
    VertexStyle,
    fill = blue}}

\tikzset{ArcStyle/.style = {
    color          = black!70,
    thick,
    >=latex,%stealth, %latex
    ->
  }}

\tikzset{LabStyle/.style = {
    text = blue!80,
    fill=white
%    font=\sffamily\bfseries
  }}

\tikzset{LoopStyle/.style = {text = blue!80}}

\tikzset{KoordStyle/.style = {->, >=latex, color=blue!40, thick}}
\tikzset{GitterStyle/.style = {very thin,color=blue!40}}

\tikzset{FillPoly/.style = {fill opacity = 0.3, fill=black}}
\tikzset{DrawPoly/.style = {very thick, draw=black}}
\tikzset{Poly/.style = {FillPoly, DrawPoly}}

\newcommand{\Gitter}[4]{
    \draw[GitterStyle,step=1.0] (#1,#3) grid (#2,#4);
}

% colors
\tikzstyle{lightgray} = [black:25]
% dots
\tikzstyle{blackdot} = [circle,draw,fill=black,scale=0.5]
\tikzstyle{whitedot} = [circle,draw,fill=white,scale=0.5]
% edges
\tikzstyle{edge} = [draw,-,black]
\tikzstyle{mediumedge} = [draw,thick,-,black,join=round]
\tikzstyle{thickedge} = [draw,ultra thick,-,black,join=round]

\newcommand\dotlabel[4]{% (style, coord, label position, label text)
   \filldraw[#1] #2 circle (3pt);
   \draw #2 node [#3] {${#4}$};
}

% Local Variables: 
% mode: latex
% mode: TeX-PDF
% TeX-master: "../etc.tex"
% buffer-file-coding-system:utf-8-unix
% End: 

%% Font settings

%% Software names
\newcommand\polymake{{\tt polymake}\xspace}
\newcommand\atint{{\tt a-tint}\xspace}

%% Math symbols

\newcommand\cT{{\mathcal T}}

\newcommand\CC{{\mathbb C}}

\newcommand\KK{{\mathbb K}}

\newcommand\QQ{{\mathbb Q}}
\newcommand\RR{{\mathbb R}}
\newcommand\ZZ{{\mathbb Z}}
\newcommand\0{{\mathbf 0}}
\newcommand\1{{\mathbf 1}}

 \newcommand\norm[1]{|\hskip-.2ex|#1|\hskip-.2ex|}
\newcommand\SetOf[2]{\left\{#1\,\vphantom{#2}\right|\left.\vphantom{#1}\,#2\right\}}
\newcommand\smallSetOf[2]{\{#1 \colon #2\}}
 % column vector with two entries
 % column vector with three entries
\newcommand\Cayley{{\mathcal C}} % Cayley embedding
\newcommand\Newton[1]{{\mathcal N}(#1)} % Newton polytope
\newcommand\extNewton[1]{\widetilde{\mathcal N}(#1)} % extended Newton polyhedron
\newcommand\dome[1]{{\mathcal D}\left(#1\right)} % dome, dual to extended Newton polyhedron
\newcommand\smalldowndome[1]{{\mathcal D}^{\downarrow}(#1)} % dome, dual to extended Newton polyhedron
\newcommand\downdome[1]{{\mathcal D}_{\downarrow}\!\left(#1\right)} % dome, dual to extended Newton polyhedron
 % dome, dual to extended Newton polyhedron
\newcommand\updome[1]{{\mathcal D}^{\uparrow}\!\left(#1\right)} % dome, dual to extended Newton polyhedron
\newcommand\downSigma[1]{\Sigma_{\downarrow}\!\left(#1\right)}
\newcommand\smalldownSigma[1]{\Sigma_{\downarrow}(#1)}
\newcommand\upSigma[1]{\Sigma^{\uparrow}\!\left(#1\right)}
 % face of polyhedron maximizing weight vector
 % Bergman fan
 % Bergman complex
 % privileged subdivision of Newton polytope
 % envelope of a point configuration

\newcommand\transpose[1]{{#1}^{\top}}
 % Bruhat-Tits building
\newcommand\variety[1]{V(#1)} % algebraic variety
\newcommand\tropvariety[1]{\cT\left(#1\right)} % tropical variety
 % min-tropical variety
 % min-tropical variety
\newcommand\smallmaxtropvariety[1]{\cT_{\max}(#1)} % max-tropical variety
\newcommand\maxtropvariety[1]{\cT_{\max}\left(#1\right)} % max-tropical variety
 % matroid polytope
 % submatroid defined by weight vector
 % Groebner fan
 % moduli space of tropical curves

 % canonical coordinates

\newcommand\xx[1]{x_1^{\pm},x_2^{\pm},\dots,x_{#1}^{\pm}} % many indeterminates, shorthand
 % many indeterminates, shorthand

 % in-arcs of a directed graph
 % out-arcs of a directed graph

% complexity
 % size 

% math operators

\DeclareMathOperator\conv{conv}

 % sign of a permutation
\DeclareMathOperator\id{id}

 % leading coefficient
 % leading monomial
 % leading term
 % initial ideal
 % interior of a topological space
 % column of a matrix
 % row of a matrix
 % projection
 % associahedron
 % tropical Grassmannian
 % Dressian
 % order polytope
 % linear program

 % sphere
\newcommand\TT{{\mathbb T}} % tropical semiring
\newcommand\maxTT{{\mathbb T}_{\max}} % tropical semiring
\newcommand\minTT{{\mathbb T}_{\min}} % tropical semiring
\newcommand\TP{{\mathbb{TP}}} % tropical projective space
\newcommand\minTP{{\mathbb{TP}}_{\min}} % tropical projective space
\DeclareMathOperator\trop{trop} % tropicalization
\DeclareMathOperator\supp{supp} % support of a polynomial etc.
 % admissible sequence of exponents
\newcommand\val{\operatorname{val}} % order of a Puiseux series

\newcommand\puiseuxseries[2]{{#1}\{\hskip-.25em\{#2\}\hskip-.25em\}}

\DeclareMathOperator\tc{tc} % tropical covector
 % tropical determinant
 % sign of tropical determinant
 % modified sign of tropical determinant
 % weight function
\DeclareMathOperator\tconv{tconv} % tropical convex hull
 % homogeneous tropical convex hull
 % set of external points
\newcommand\mintpos{\operatorname{tpos}_{\min}} % min-tropical convex hull
 % min-tropical convex hull
\newcommand\mintconv{\operatorname{tconv}_{\min}} % min-tropical convex hull
\newcommand\mintconvcirc{\operatorname{tconv}_{\min}^\circ} % min-tropical convex hull
\newcommand\maxtconv{\operatorname{tconv}_{\max}} % max-tropical convex hull
 % max-tropical convex hull
\newcommand\minodot{\odot_{\min}}
\newcommand\maxodot{\odot_{\max}}

\newcommand\maxoplus{\oplus_{\max}}

 % polyhedral decmposition of tropical projective torus by sectors

 % tropical halfspace with apex and list of sectors
 % open tropical halfspace with apex and list of sectors
 % tropical scalar product
 % type

\newcommand\doi[1]{\href{http://dx.doi.org/#1}{\texttt{doi:#1}}}

%%%%%%%%%%%%%%%%%%%%%%%%%%%%%%%%%%%%%%%%%%%%%%%%%%%%%%%%%%%%%%%%%%%%%

\theoremstyle{plain}
    \newtheorem{theorem}{Theorem}
    \newtheorem{corollary}[theorem]{Corollary}
    \newtheorem{lemma}[theorem]{Lemma}
    \newtheorem{proposition}[theorem]{Proposition}
\theoremstyle{definition}
    \newtheorem{remark}[theorem]{Remark}
    \newtheorem{example}[theorem]{Example}

    % letter-numbered Questions:

\graphicspath{ {img/} }

%%%%%%%%%%%%%%%%%%%%%%%%%%%%%%%%%%%%%%%%%%%%%%%%%%%%%%%%%%%%%%%%%%%%%
%% Title and authors
%%%%%%%%%%%%%%%%%%%%%%%%%%%%%%%%%%%%%%%%%%%%%%%%%%%%%%%%%%%%%%%%%%%%%
\title[The Cayley Trick for Tropical Hypersurfaces]{The Cayley Trick for Tropical Hypersurfaces\\ With a View Toward Ricardian Economics}

\author{Michael Joswig}

\address{
  Institut f{\"u}r Mathematik, MA 6-2,
  TU Berlin,
  Str.\ des 17. Juni 136, 10623~Berlin, Germany
}
\email{joswig@math.tu-berlin.de}

\thanks{This research is carried out in the framework of Matheon supported by Einstein Foundation Berlin. Further support by Deutsche
Forschungsgemeinschaft (Priority Program 1489: ``Experimental methods in algebra, geometry, and number theory'' and SFB/TR 109: ``Discretization in Geometry and Dynamics'')}

\dedicatory{Dedicated to Winfried Bruns on the occasion of his 70th birthday.}

\subjclass[2010]{14T05 (52B20, 91B60)}
\keywords{tropical hypersurfaces; mixed subdivisions; Ricardian economics}

%%%%%%%%%%%%%%%%%%%%%%%%%%%%%%%%%%%%%%%%%%%%%%%%%%%%%%%%%%%%%%%%%%%%%
%% Begin
%%%%%%%%%%%%%%%%%%%%%%%%%%%%%%%%%%%%%%%%%%%%%%%%%%%%%%%%%%%%%%%%%%%%%
\begin{document}

\begin{abstract}
  The purpose of this survey is to summarize known results about tropical hypersurfaces and the Cayley Trick from polyhedral geometry.
  This allows for a systematic study of arrangements of tropical hypersurfaces and, in particular, arrangements of tropical hyperplanes.
  A recent application to the Ricardian theory of trade from mathematical economics is explored.
\end{abstract}
\maketitle

\section{Introduction}
\noindent
The main motivation for this text are the applications of tropical geometry to economics, that came up recently.
In particular, Shiozawa gave an explanation of the Ricardian theory of international trade in terms of tropical combinatorics~\cite{Shiozawa:2015}.
The purpose of that theory is to study the relationship between wages and prices on the world market.
Our goal here is to put some of Shiozawa's results into the wider context of polyhedral and tropical geometry.

The Cayley Trick explains a special class of subdivisions of the Minkowski sum of finite point configurations in terms of a lifting to higher dimensions.
Those subdivisions are called \emph{mixed}.
Mixed subdivisions of Minkowski sums and mixed volumes play a key role in Bernstein's method for solving systems of polynomial equations.
Triangulations and more general polytopal subdivisions are the topic of the monograph \cite{Triangulations} by De Loera, Rambau and Santos.
In Section 1.3 of that book the relationship between systems of polynomials and mixed subdivisions is discussed.
Tropical geometry studies the images of algebraic varieties over fields with a discrete non-archimedean valuation under the valuation map; see Maclagan and Sturmfels \cite{Tropical+Book}.
Section~4.6 of that reference deals with a tropical version of Bernstein's Theorem, and this employs the Cayley Trick, too; see also Jensen's recent work on tropical homotopy continuation \cite{Jensen:1601.02818}.
A first version of the Cayley Trick was obtained by Sturmfels \cite{Sturmfels:1994}.
In its full generality it was proved by Huber, Rambau and Santos~\cite{HuberRambauSantos00}.

As its key contribution to tropical geometry the Cayley Trick explains how unions of tropical hypersurfaces work out.
It says that the union of two tropical hypersurfaces is dual to the mixed subdivision of the regular subdivisions which are dual to the two components.
This has been exploited by Develin and Sturmfels for the study of arrangements of tropical hyperplanes in the context of tropical convexity \cite{DevelinSturmfels04}.
More recently, those results have been extended by Fink and Rinc\'on \cite{FinkRincon:2015} and by Loho and the author \cite{JoswigLoho:1503.04707}.
It is this perspective which proves useful for applications to Ricardian economics.

Another recent application of tropical geometry to economics is Baldwin and Klemperer's study of ``product-mix auctions'' \cite{BaldwinKlemperer}.
There are $n$ indivisible goods, which are auctioneered in a one-round auction.
Each bidder gives bids (real numbers) for finitely many bundles of such goods (integer vectors of length $n$).
Aggregating all bundles of all bidders together with their bids leads to a mixed subdivision which is known as the ``demand complex''.
In contrast to the situation for the Ricardian economy, which is about tropical hyperplanes, i.e., tropical hypersurfaces of degree one, the tropical hypersurfaces that occur in product-mix auctions may have arbitrarily high degree.
While some of the results presented here do apply, product mix auctions themselves are beyond the scope of this survey.
In addition to the original~\cite{BaldwinKlemperer} the interested reader should consult Tran and Yu \cite{TranYu:1505.05737}.
In a similar vein Crowell and Tran studied applications of tropical geometry to mechanism design \cite{CrowellTran:1606.04880}.

I am indebted to Jules Depersin, Simon Hampe, Georg Loho, Yoshinori Shiozawa, and an anonymous referee for valuable discussions and comments.
The computations and the visualization related to the examples were obtained with \polymake \cite{DMV:polymake} and its extension \atint \cite{atint}.

\section{Regular and Mixed Subdivisions}
\noindent
We will start out with an explanation of the Cayley Trick.
Let $A$ be a finite set of points in $\RR^d$.
A \emph{(polyhedral) subdivision} of $A$ is a finite polytopal complex whose vertices lie in the set $A$ and that covers the convex hull $\conv A$.
For basic facts on the subject we refer to \cite{Triangulations}.
If $\lambda$ is any function that assigns a real number to each point in $A$, then the set
\begin{equation}\label{eq:extendedNewton}
  U(A,\lambda) \ := \ \conv\SetOf{(a,\lambda(a))\in\RR^d\times\RR}{a\in A} + \RR_{\geq 0} (\0,1)
\end{equation}
is an unbounded polyhedron in $\RR^{d+1}$; here ``+'' is the Minkowski addition, and $(\0,1)$ is the unit vector that indicates the ``upward'' direction.
Those faces of $U(A,\lambda)$ that are bounded admit an outward normal vector which points down, i.e., its scalar product with $(\0,1)$ is strictly negative.
Note that the outward normal vector of a facet is unique up to scaling.
On the other hand each lower-dimensional face has an entire cone of outward normal vectors, which is positively spanned by the outward normal vectors of the facets containing that face.
Projecting the bounded faces to $\RR^d$ by omitting the last coordinate defines a subdivision of $A$, which we denote as $\downSigma{A,\lambda}$.
A subdivision that arises in this way is called \emph{regular}.
The example where $\lambda(a)=\norm{a}^2$ is the Euclidean norm squared is the \emph{Delaunay} subdivision of $A$.

Now consider two finite subsets, $A$ and $B$, in $\RR^d$.
In the sequel we will be interested in special subdivisions of the Minkowski sum $A+B$.
Yet it will be important to address points in $A+B$ by their \emph{labels}.
That is, for distinct $a,a'\in A$ and distinct $b,b'\in B$ it may happen that $a+b=a'+b'$.
Nonetheless the label $(a,b)$ differs from the label $(a',b')$.
This means that the various labels of each point in $A+B$ keep track of the possibly many ways in which that point originates from $A$ and $B$.
For $A'\subset A$ and $B'\subset B$ the \emph{mixed cell} of $A+B$ with label set $A'\times B'$ is the polytope
\begin{equation}\label{eq:minkowski_cell}
  M(A',B') \ := \ \conv\SetOf{a+b}{a\in A',\, b\in B'} \enspace .
\end{equation}
Notice that the label set may also record points that are not vertices of $M(A',B')$.
A polyhedral subdivision of $A+B$ is \emph{mixed} if it is formed from mixed cells.

The \emph{Cayley embedding} of the point configurations $A$ and $B$ in $\RR^d$ is the point configuration
\begin{equation}\label{eq:cayley}
  \Cayley(A,B) \ := \ \SetOf{(a,-1)}{a\in A}\cup\SetOf{(b,1)}{b\in B}
\end{equation}
in $\RR^{d+1}$.
Any polytope of the form $\conv(\Cayley(A',B'))$ for subsets $A'\subset A$ and $B'\subset B$ is a \emph{Cayley cell}.
Intersecting the Cayley cell $\conv(\Cayley(A',B'))$ with the hyperplane $x_{d+1}=0$ yields the \emph{Minkowski cell} $M(A',B')$ with labeling $A'\times B'$.
Note that formally this does not quite agree with (\ref{eq:minkowski_cell}), not only because we identify $\RR^d$ with a linear hyperplane in $\RR^{d+1}$, but also because the intersection of the Cayley cell with that hyperplane  needs to be scaled by a factor of two to arrive at (\ref{eq:minkowski_cell}).
However, to avoid cumbersome notation, we ignore these details.
Let $\Sigma$ be any polyhedral subdivision of $\Cayley(A,B)$.
Then the set
\[
M(\Sigma) \ := \ \SetOf{M(A',B')}{\conv(\Cayley(A',B'))\in\Sigma}
\]
is a subdivision of the scaled Minkowski sum $\frac{1}{2}(A+B)$, and it is called the \emph{mixed subdivision} induced by $\Sigma$.
Again we will ignore the scaling factor, i.e., we will view $M(\Sigma)$ as a subdivision of $A+B$.

\begin{example}
  Let $A=\{0,1\}$ and $B=\{1,3\}$ be two pairs of points on the real line.
  The Cayley embedding $\Cayley(A,B)$ are the four vertices of the trapezoid shown in Figure~\ref{fig:cayley}.
  The two triangles
  \[
  \begin{aligned}
    \conv\Cayley(A,\{1\}) \ &= \ \conv\{(0,-1),(1,-1),(1,1)\} \quad \text{and} \\
    \conv\Cayley(\{1\},B) \ &= \ \conv\{(1,-1),(1,1),(3,1)\}
  \end{aligned}
  \]
  are Cayley cells, and they generate a subdivision of $\Cayley(A,B)$ which induces a mixed subdivision of the Minkowski sum $A+B=\{0+1,1+1,0+3,1+3\}$.
\end{example}

\begin{figure}[hb]\centering

\begin{tikzpicture}
\newcommand\height{2}

% A : a segment
\coordinate (a) at (0,0);
\coordinate (b) at (1,0);

% B : another segment
\coordinate (u) at (1,\height);
\coordinate (v) at (3,\height);

% A+B
\foreach \first in {a,b}
  \foreach \second in {u,v}
     \coordinate (\first\second) at ($0.5*(\first)+0.5*(\second)$);

%\draw[step=1,black:25,thin,dotted] (0,0) grid (3,2);

\draw[thickedge] (a) -- (b);
\node[below right] at (b) {$A\times\{-1\}$};

\draw[thickedge] (u) -- (v);
\node[below right] at (v) {$B\times\{1\}$};

\draw[mediumedge] (a) -- (u) -- (b) -- (v);

\foreach \point in {a,b,u,v}
  \filldraw[fill=black] (\point) circle (2pt);

\draw[thickedge] (au) -- (bv);
\node[below right] at (bv) {$\frac{1}{2}\cdot \bigl((A+B)\times\{0\}\bigr)$};

\foreach \point in {au,bu,bv}
  \filldraw[fill=white] (\point) circle (2pt);

\end{tikzpicture}
\caption{Cayley embedding of the two intervals $A=[0,1]$ and $B=[1,3]$}
\label{fig:cayley}
\end{figure}
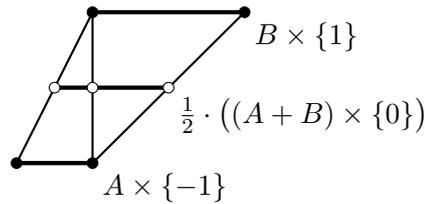

Notice that two Minkowski cells $M(A',B')$ and $M(A'',B'')$ intersect in the Minkowski cell $M(A'\cap A'',B'\cap B'')$ with labeling $(A'\cap A'')\times(B'\cap B'')$.
This consistency among the labels of the cells in a mixed subdivision allows to uniquely lift back any mixed subdivision to a subdivision of the Cayley embedding.
\begin{theorem}[{Cayley Trick \cite{Sturmfels:1994} \cite{HuberRambauSantos00}}]
  The map $M$ from the set of subdivisions of $\Cayley(A,B)$ to the set of mixed subdivisions of $A+B$ is a bijection that preserves refinement.
  Moreover, $M$ maps regular subdivisions to regular subdivisions.
\end{theorem}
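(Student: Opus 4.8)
The plan is to recognize the map $M$ as the operation of \emph{slicing} a subdivision of the Cayley polytope $\conv(\Cayley(A,B))\subseteq\RR^{d+1}$ with the hyperplane $H=\{x_{d+1}=0\}$, and to write its inverse as the \emph{lift} that sends a mixed cell $M(A',B')$ to the Cayley cell $\conv(\Cayley(A',B'))$. Throughout I would use the elementary fact that the restriction of a polyhedral subdivision to an affine subspace is again a polyhedral subdivision: for a subdivision $\Sigma$ of $\conv(\Cayley(A,B))$ the family $\{C\cap H:C\in\Sigma\}$, together with all faces, subdivides $\conv(\Cayley(A,B))\cap H$, which up to the scaling we are instructed to ignore is $\conv(A+B)$.

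First I would verify that $M$ is well-defined and monotone. Since no point of $\Cayley(A,B)$ lies on $H$, every cell $C\in\Sigma$ equals $\conv(\Cayley(A',B'))$ for the unique pair $(A',B')$ obtained by splitting the vertex set of $C$ according to the sign of the last coordinate, and then $C\cap H=M(A',B')$ up to scaling; hence $M(\Sigma)$ consists of mixed cells. The resulting labelling is consistent: for $C=\conv(\Cayley(A',B'))$ and $\widetilde C=\conv(\Cayley(\widetilde A',\widetilde B'))$ in $\Sigma$ the common face $C\cap\widetilde C$ is again a Cayley cell, forced by comparing vertex sets to be $\conv(\Cayley(A'\cap\widetilde A',\,B'\cap\widetilde B'))$, so the two slices meet in $M(A'\cap\widetilde A',B'\cap\widetilde B')$ with the expected label. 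The same slicing picture makes monotonicity transparent: if $\Sigma_1$ refines $\Sigma_2$ then $M(\Sigma_1)$ refines $M(\Sigma_2)$. For injectivity I would observe that a labelled mixed subdivision records the label $A'\times B'$ of each of its cells, and $\conv(\Cayley(A',B'))$ is the unique polytope with vertex set $\Cayley(A',B')$; hence $\Sigma=\{\,\text{faces of }\conv(\Cayley(A',B')):M(A',B')\in M(\Sigma)\,\}$ can be read back off $M(\Sigma)$, and this inverse is again monotone, so $M$ is an isomorphism of the two refinement posets as soon as it is surjective.

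Surjectivity is the crux. Given an arbitrary mixed subdivision $\Sigma'$ of $A+B$ with its consistent labels, I would form $\widehat\Sigma:=\{\,\text{faces of }\conv(\Cayley(A',B')):M(A',B')\in\Sigma'\,\}$ and prove it subdivides $\Cayley(A,B)$; then $M(\widehat\Sigma)=\Sigma'$ is immediate. Closure under faces is automatic. For the two remaining conditions — that distinct Cayley cells meet in a common face and that $\widehat\Sigma$ covers $\conv(\Cayley(A,B))$ — I would argue fibrewise in the last coordinate: the fibre of $\conv(\Cayley(A,B))$ over $t\in(-1,1)$ is the Minkowski sum $\tfrac{1-t}{2}\conv A+\tfrac{1+t}{2}\conv B$ and the fibre of $\conv(\Cayley(A',B'))$ is $\tfrac{1-t}{2}\conv A'+\tfrac{1+t}{2}\conv B'$, so it suffices to know, for each $t$, that $\Sigma'$ read off with these dilated summands is again a labelled subdivision of the dilated Minkowski sum. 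For $t=0$ this is exactly the hypothesis that $\Sigma'$ is a mixed subdivision (the remark before the theorem supplying the intersection property among its cells). The passage to arbitrary $t$ is the step I expect to be the main obstacle, since a point of a fibre has many expressions as a convex combination of labelling points and one must single out one compatible with a single mixed cell; the underlying principle is that a mixed subdivision is insensitive to dilating its summands, reflected in the normal fan of a Minkowski sum of dilated polytopes being the common refinement of the summands' normal fans, independent of the dilation factors. This is precisely the bookkeeping handled by the lifting-subdivision framework of Huber--Rambau--Santos, to which I would refer for the complete argument; assembling the fibres (the extreme ones $t=\pm1$ being $\conv A$ and $\conv B$) then finishes the proof.

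Finally, $M$ carries regular subdivisions to regular ones: if $\Sigma=\downSigma{\Cayley(A,B),\omega}$ for a height function $\omega$ on $\Cayley(A,B)$, then assigning to the label $(a,b)$ the number $\tfrac12\bigl(\omega(a,-1)+\omega(b,1)\bigr)$ exhibits $M(\Sigma)$ as the regular mixed subdivision induced by that height on the labelled configuration. The verification is a direct comparison of the bounded faces of $U(\Cayley(A,B),\omega)$ with those of the correspondingly lifted Minkowski sum — the lower-envelope identity that also underlies Bernstein's Theorem — which I would carry out by checking that a pair $(a,b)$ contributes to a given mixed cell precisely when $(a,-1)$ and $(b,1)$ lie on a common bounded face of $U(\Cayley(A,B),\omega)$.
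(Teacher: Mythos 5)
The paper itself offers no proof of this statement: it is quoted from Sturmfels and Huber--Rambau--Santos, and the only justification given in the text is the preceding remark that the labels of intersecting Minkowski cells are consistent, so that a mixed subdivision can be lifted back uniquely. Your outline is therefore already more detailed than the source, and its architecture --- slicing with $\{x_{d+1}=0\}$ to define $M$, reading the labels back to define the inverse, and the averaged heights $\tfrac12\bigl(\omega(a,-1)+\omega(b,1)\bigr)$ for regularity --- is the standard and correct one.

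Nevertheless, as a proof it has a genuine gap exactly where you flag it. Surjectivity amounts to showing that the lifted family $\widehat\Sigma$ covers $\conv(\Cayley(A,B))$ and has the proper-intersection property, and your fibrewise reduction rests on the assertion that a mixed subdivision of $A+B$ in the paper's intrinsic sense (a polyhedral subdivision formed from consistently labelled Minkowski cells) induces a subdivision of every dilated sum $\tfrac{1-t}{2}A+\tfrac{1+t}{2}B$. That dilation-invariance is not a routine lemma; it is essentially equivalent to the Cayley Trick itself, and it is precisely the content of the Huber--Rambau--Santos argument you defer to, so the decisive step of the theorem is outsourced rather than proved. A second, smaller issue: even granting that every fibre is covered, proper intersection fibre-by-fibre does not formally yield proper intersection of the Cayley cells in $\RR^{d+1}$; one still has to argue directly that $C\cap\widetilde C$ is a common face, e.g. by identifying it with $\conv(\Cayley(A'\cap\widetilde A',B'\cap\widetilde B'))$ and checking it is cut out by a supporting hyperplane of both cells. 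In short: a sound and honest plan, consistent with the literature the paper cites, but not a self-contained proof.
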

% Observation 9.2.1. Cayley cells are in one-to-one correspondence with Minkowski cells.
% Observation 9.2.2. Polyhedral subdivisions of the Cayley embedding are in one-to-one correspondence with polyhedral subdivisions of the Minkowski sum by cells that are labeled by products.

Minkowski sums, mixed subdivisions and the Cayley Trick generalize to any finite number of point sets.
To this end assume that we have $n$ sets $A_1,A_2,\dots,A_n$ in $\RR^d$.
Then we pick an affine basis $u_1,u_2,\dots,u_n$ of $\RR^{n-1}$, i.e., the vertices of a full-dimensional simplex.
We define the \emph{Cayley embedding}
\[
\Cayley(A_1,A_2,\dots,A_n) \ := \ \SetOf{(a_1,u_1)}{a_1\in A_1}\cup\dots\cup\SetOf{(a_n,u_n)}{a_n\in A_n}
\]
in $\RR^d\times\RR^{n-1}$.
A particularly interesting case arises if we take $n$ copies of the same point set $A\in\RR^d$.
Then the Cayley embedding satisfies
\begin{equation}\label{eq:product}
  \Cayley(\underbrace{A,A,\dots,A}_{n \text{ times}}) \ \cong \ A \times \Delta_{n-1} \enspace ,
\end{equation}
where $\Delta_{n-1}=\conv(e_1,e_2,\dots,e_n)$ is the $(n{-}1)$-dimensional standard simplex in $\RR^n$.
Notice that we write ``$\cong$'' instead of ``='' since \eqref{eq:product} is only an affinely isomorphic image of what we defined in \eqref{eq:cayley}.
An in-depth explanation of the Cayley Trick can be found in~\cite[\S9.2]{Triangulations}.

\section{Tropical Hypersurfaces}
\noindent
Now we want to take a look into a few basic concepts from tropical geometry.
The Cayley Trick will prove useful to understanding unions of tropical hypersurfaces.

The \emph{tropical semiring} is the set $\TT=\RR\cup\{\infty\}$ equipped with $\min$ as the addition and $+$ as the multiplication.
The neutral element of the addition is $\infty$, and the multiplicative neutral element is $0$.
The tropical semiring behaves like a ring --- with the lack of additive inverses as the crucial exception.
If we want to stress the systematic role of these two arithmetic operations we write $\oplus$ instead of $\min$ and $\odot$ instead of $+$.
For further details on tropical geometry we refer to the monograph \cite{Tropical+Book} and the forthcoming book \cite{ETC}.

A \emph{tropical polynomial} is a formal linear combination of finitely many monomials (with integer exponents that may also be negative) in, say, $d$ variables with coefficients in $\TT$.
In this way a tropical polynomial $F$ gives rise to a function
\begin{equation}\label{eq:trop_polynomial}
  F(x) \ = \ \bigoplus_{m\in I} c_m \odot x^{\odot m} \ = \ \min_{m\in I}(c_m+m\cdot x) \enspace ,
\end{equation}
where $I$ is a finite subset of $\ZZ^d$ and the coefficients $c_m$ are elements of $\TT$.
By construction \eqref{eq:trop_polynomial} is a piecewise linear and concave function from $\RR^d$ to $\RR$.
The set $\supp(F)=\smallSetOf{m\in I}{c_m\neq\infty}$ is the \emph{support} of $F$.
Occasionally we will distinguish between formal tropical polynomials and tropical polynomial functions.
The set of formal tropical polynomials has a semiring structure where the addition and the multiplication is induced by $\oplus$ and~$\odot$.

We may read the support of a tropical polynomial as a point configuration that is equipped with a height function given by the coefficients, and this is what gives us a connection to the previous section.
The \emph{extended Newton polyhedron} of a tropical polynomial is a special case of \eqref{eq:extendedNewton}.
More precisely, if $F$ is defined as in \eqref{eq:trop_polynomial}, then we have
\[
\extNewton{F} \ := \ U(\supp(F),c) \enspace .
\]
Projecting the faces of $\extNewton{F}$ down yields the regular subdivision $\downSigma{F}:=\downSigma{\supp(F),c}$ of the support, and the convex hull is the \emph{Newton polytope} $\Newton{F}:=\conv(\supp(F))$.
It is worth noting that \emph{any} lifting function on \emph{any} finite set of lattice points can be read as a tropical polynomial.

One purpose of tropical geometry is to study classical algebraic varieties via their tropicalizations, which can be described in polyhedral terms.
Here we will restrict our attention to tropical hypersurfaces, which are the tropical analogs of the vanishing locus of a single classical polynomial.
The tropical polynomial $F$ \emph{vanishes} at $x\in\RR^d$ if the minimum in \eqref{eq:trop_polynomial} is attained at least twice, and the set
\[
\tropvariety{F} \ := \ \SetOf{x\in\RR^d}{F \text{ vanishes at } x}
\]
is the \emph{tropical hypersurface} defined by $F$.
It is immediate that $\tropvariety{F}$ is a polyhedral complex in $\RR^d$.
What may be less obvious is that this is a meaningful definition.
Yet the Fundamental Theorem of Tropical Geometry says that the tropical hypersurfaces are the images of classical varieties over a field with a non-Archimedean valuation (into the reals) under the valuation map; see Theorem~\ref{thm:fundamental} below.
However, we wish to postpone this discussion for a short moment, as we first want to introduce another polyhedron that we can associate with $F$; this is the \emph{dome}
\begin{equation}\label{eq:dome}
  \begin{split}
  \dome{F} \ :=& \ \SetOf{(p,s)\in\RR^{d+1}}{p\in\RR^d,\, s\in\RR,\, s\le F(p)} \\
  =& \ \bigcap_{m\in\supp(F)}\SetOf{(p,s)\in\RR^{d+1}}{s\leq c_m+m\cdot p} \enspace ,
\end{split}
\end{equation}
which is unbounded in the negative $e_{d+1}$-direction and of full dimension $d+1$.
Let $\updome{F}$ be the polyhedral complex that arises from $\partial\dome{F}$ by omitting the last coordinate, and we call this the \emph{normal complex} of the extended Newton polyhedron $\extNewton{F}$, or the \emph{normal complex} of $F$, for short.
This is a polyhedral subdivision of $\RR^d$ which is piecewise-linearly isomorphic to the boundary $\partial\dome{F}$ of the dome.
Now the tropical hypersurface $\tropvariety{F}$ is the codimension-$1$-skeleton of the normal complex $\updome{F}$, i.e., it corresponds to the codimension-$2$-skeleton of the polyhedron $\dome{F}$.
The latter is the set of faces whose dimension does not exceed $d-1$.
Summing up we have the following observation.
\begin{lemma}\label{lem:coarse}
  The facet defining inequalities of $\dome{F}$ correspond to certain points in the support of $F$.
  Furthermore, the facets of $\dome{F}$ are in bijection with the maximal cells of $\updome{F}$ as well as with the connected components of the complement of $\tropvariety{F}$ in $\RR^d$.
\end{lemma}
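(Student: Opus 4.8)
The plan is to read off all three statements from the two descriptions of $\dome F$ given in~\eqref{eq:dome}, the first of which says that $\partial\dome F$ is nothing but the graph of the piecewise-linear function~$F$. For the statement about facet-defining inequalities, observe that by the second line of~\eqref{eq:dome} the polyhedron $\dome F$ is the intersection of the closed half-spaces $H_m := \smallSetOf{(p,s)\in\RR^{d+1}}{s\le c_m+m\cdot p}$ as $m$ ranges over $\supp F$. First I would pass to an irredundant subsystem of these inequalities; since $\dome F$ is full-dimensional, the surviving half-spaces are in bijection with its facets, each facet being $\dome F\cap\partial H_m$ for the corresponding~$m$. Two half-spaces $H_m$ and $H_{m'}$ coincide only when $c_m+m\cdot p = c_{m'}+m'\cdot p$ for all~$p$, which forces $m=m'$; hence assigning to each facet the support point that indexes its defining half-space is a well-defined injection into $\supp F$. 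Not every point of $\supp F$ need occur, since some of the inequalities in~\eqref{eq:dome} may be redundant.

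For the bijection between facets of $\dome F$ and maximal cells of $\updome F$, the key observation is that the first line of~\eqref{eq:dome} reads $\dome F = \smallSetOf{(p,s)}{s\le F(p)}$, so $\partial\dome F = \smallSetOf{(p,F(p))}{p\in\RR^d} = \operatorname{graph}(F)$. As already recorded in the text, omitting the last coordinate sends the boundary complex of $\dome F$ to $\updome F$ by a piecewise-linear isomorphism; concretely this is the coordinate projection $\pi\colon\RR^{d+1}\to\RR^d$, which restricts to a homeomorphism $\partial\dome F\to\RR^d$ with inverse $p\mapsto(p,F(p))$ and hence to a dimension-preserving bijection on cells --- no face of $\dome F$ lying in $\partial\dome F$ can contain two points with the same first $d$ coordinates, as these would share the value $F(p)$. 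Since $\dome F$ has dimension $d+1$, its facets are precisely its maximal proper faces, and every proper face of $\dome F$ lies in a facet; therefore $\pi$ matches the facets of $\dome F$ bijectively with the maximal, i.e.\ $d$-dimensional, cells of $\updome F$.

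For the last bijection, recall from the text that $\tropvariety F$ is the codimension-$1$ skeleton of $\updome F$, that is, the union of all cells of dimension at most $d-1$. Since $\updome F$ is a polyhedral subdivision of $\RR^d$, the relative interiors of its cells partition $\RR^d$ and any two cells meet only in a common face; hence a point avoids $\tropvariety F$ exactly when it lies in the (topological) interior $\interior(\sigma)$ of some maximal cell~$\sigma$, and $\RR^d\setminus\tropvariety F$ is the disjoint union of these interiors. Each $\interior(\sigma)$ is convex, hence connected, and it is also closed in $\RR^d\setminus\tropvariety F$, being the complement there of the union of the remaining interiors; as a connected clopen subset it is a connected component. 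So the connected components of $\RR^d\setminus\tropvariety F$ are exactly the interiors of the maximal cells of $\updome F$, which by the previous paragraph are in bijection with the facets of $\dome F$.

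I do not expect a genuine obstacle here: once one notices that $\partial\dome F = \operatorname{graph}(F)$, everything is bookkeeping on top of~\eqref{eq:dome} and the standard facts about irredundant H-descriptions of polyhedra and about polyhedral subdivisions that the text already quotes. The one step that deserves care is the final topological argument, where a disjoint union of open connected sets has to be upgraded to \emph{the list of} connected components; this relies precisely on distinct cells of a subdivision meeting only along shared faces, so that their interiors form a pairwise disjoint open cover of the complement. It is also prudent to handle the degenerate case $\abs{\supp F}=1$ separately, where $F$ is affine, $\tropvariety F = \emptyset$, and all three counts equal~$1$. Finally, chaining the first two paragraphs shows in passing that the maximal cells of $\updome F$ are themselves indexed by a subset of $\supp F$ --- the concrete shape of the earlier remark that any lifting function on a finite set of lattice points can be read as a tropical polynomial.
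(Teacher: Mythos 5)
Your proof is correct and follows essentially the same route as the paper, which states this as an observation summing up the preceding discussion (the dome as an intersection of half-spaces indexed by $\supp(F)$, the projection $\partial\dome{F}\to\updome{F}$, and $\tropvariety{F}$ as the codimension-$1$ skeleton) and only adds the explicit facet criterion afterwards. You have merely filled in the standard polyhedral bookkeeping, including the clopen-components argument, all of which checks out.
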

More precisely, using the notation of \eqref{eq:trop_polynomial} and \eqref{eq:dome}, the point $m\in\supp{F}$ yields a facet of $\dome{F}$ if there exists an $x\in\RR^d$ such that $F(x)=c_m\odot x^{\odot m}$ and $F(x)<c_{m'}\odot x^{\odot m'}$ for all $m'\neq m$.
In that case the inequality $s\leq c_m+m\cdot x$ is facet defining.
Now we want to relate the dome with the extended Newton polyhedron and the induced regular subdivision.
\begin{proposition}
  There is an inclusion reversing bijection between the face poset of $\dome{F}$ and the poset of bounded faces of $\extNewton{F}$.
  This entails that the tropical variety $\tropvariety{F}$ is dual to the regular subdivision $\downSigma{F}$ of $\supp{F}$.
\end{proposition}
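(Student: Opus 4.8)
The plan is to treat the faces of $\dome{F}$ and the bounded faces of $\extNewton{F}$ on the same footing, indexing both by \emph{active sets} of labels, and then to read the bijection off of the function $F$. To a nonempty face $\phi$ of $\dome{F}$ I would assign the set $S(\phi)\subseteq\supp(F)$ consisting of those inequalities $s\le c_m+m\cdot p$ from the second line of \eqref{eq:dome} that hold with equality on all of $\phi$; since $\dome{F}$ is the intersection of precisely these halfspaces, $S(\phi)$ is the \emph{largest} subset with this property, a smaller face satisfies more of the equations, and $\phi\mapsto S(\phi)$ is an order-reversing injection into the subsets of $\supp(F)$. The candidate bijection is then $\phi\mapsto G(\phi):=\conv\{(m,c_m):m\in S(\phi)\}$.

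First I would identify, for a point $w\in\RR^d$, which face each of the two polyhedra presents in the direction $w$. On the side of the dome, $F(w)=\min_m(c_m+m\cdot w)$ shows that the inequalities of \eqref{eq:dome} active at the boundary point $(w,F(w))$ are exactly those indexed by $\argmin_m(c_m+m\cdot w)$; hence the face of $\dome{F}$ whose relative interior contains $(w,F(w))$ has active set $S=\argmin_m(c_m+m\cdot w)$, and, running this observation through a relative interior point, every nonempty face of $\dome{F}$ arises this way. On the side of $\extNewton{F}$, the recession cone is $\RR_{\ge0}(\0,1)$, so the functional $(w,1)$ attains its minimum over $\extNewton{F}$ on a bounded face; that minimum is $F(w)$, and the minimizing face is $\conv\{(m,c_m):c_m+m\cdot w=F(w)\}=G(\phi)$. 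Conversely every bounded face of $\extNewton{F}$ is a lower face and is therefore exposed by some such functional $(w,1)$. This yields a bijection between the nonempty faces of $\dome{F}$ and the nonempty bounded faces of $\extNewton{F}$; it reverses inclusion because $\phi\subseteq\phi'\iff S(\phi)\supseteq S(\phi')\iff G(\phi)\subseteq G(\phi')$, and adjoining $\emptyset\mapsto\emptyset$ completes the poset isomorphism (the improper face $\dome{F}$ itself being the only loose end, and carrying no information). Alongside this I would record the complementarity of dimension: the projection to $\RR^d$ is affine and injective on the lower face $G(\phi)$, so $\dim G(\phi)=\dim\aff(S(\phi))$, whereas $\phi$ is the graph of $F$ over the polyhedron cut out in $\RR^d$ by the equations $c_m+m\cdot p=c_{m'}+m'\cdot p$ for $m,m'\in S(\phi)$, which has dimension $d-\dim\aff(S(\phi))$; hence $\dim\phi+\dim G(\phi)=d$.

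For the ``entails'' clause I would chain three correspondences. Dropping the last coordinate is a piecewise-linear isomorphism $\partial\dome{F}\to\RR^d$, so the nonempty faces of $\dome{F}$ correspond order- and dimension-preservingly to the cells of $\updome{F}$; the bijection above turns these into the nonempty bounded faces of $\extNewton{F}$, reversing inclusion and sending a $j$-cell to a $(d-j)$-cell; and projecting the bounded faces of $\extNewton{F}$ to $\RR^d$ is, by the construction of regular subdivisions via lower faces recalled earlier, an order- and dimension-preserving bijection onto the cells of $\downSigma{F}$. Composing, the cells of $\updome{F}$ and the cells of $\downSigma{F}$ are in inclusion-reversing bijection with a $j$-cell matched to a $(d-j)$-cell; restricting it to the cells of $\updome{F}$ of dimension at most $d-1$, which by definition constitute $\tropvariety{F}$, leaves exactly the cells of $\downSigma{F}$ of dimension at least $1$. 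That is the asserted duality, and it is consistent with Lemma~\ref{lem:coarse}: under the correspondence the facets of $\dome{F}$ match the vertices of $\extNewton{F}$ — certain points of $\supp(F)$ — and the full-dimensional cells of $\downSigma{F}$ match the vertices of $\updome{F}$.

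The part I expect to cost the most care is this middle link: one must check that $\RR^{d+1}\to\RR^d$ really is injective on \emph{every} bounded face of $\extNewton{F}$, that \emph{every} bounded face of $\extNewton{F}$ — not merely the facets visible from below — is exposed by a functional of the shape $(w,1)$, and symmetrically that \emph{every} nonempty face of $\dome{F}$, including the low-dimensional ones, is realized by some $w$. All three follow from the identity $F(w)=\min_m(c_m+m\cdot w)$ together with the elementary fact that in a polyhedron every face is the intersection of the facets containing it and is hence exposed by the sum of the corresponding functionals; once these are set up, the remainder is bookkeeping.
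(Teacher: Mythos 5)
Your proof is correct, and it takes a more explicit route than the paper, which disposes of this proposition in a single sentence by appealing to ``cone polarity'' and the observation that the face poset of $\dome{F}$ is isomorphic to the poset of cells of $\updome{F}$. Your argument realizes that polarity concretely: you index faces on both sides by their active sets $S(\phi)\subseteq\supp(F)$, verify via the functionals $(w,1)$ that every nonempty proper face of $\dome{F}$ and every nonempty bounded (i.e.\ lower) face of $\extNewton{F}$ carries the same active-set data, and then read off the inclusion reversal and the dimension complementarity $\dim\phi+\dim G(\phi)=d$ from the affine span of $S(\phi)$. This buys a self-contained, checkable argument where the paper only gestures at the mechanism. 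One minor correction in the inclusion-reversal chain: since $S(\phi)\supseteq S(\phi')$ gives $G(\phi)\supseteq G(\phi')$, the final biconditional should read $\phi\subseteq\phi'\iff G(\phi)\supseteq G(\phi')$ (your typo $\subseteq$ would make the map inclusion-\emph{preserving}, contrary to what you then correctly assert). A second small point of bookkeeping: under your correspondence the improper face $\dome{F}$ has active set $\emptyset$ and lands on the empty face of $\extNewton{F}$, so the cleanest statement is a bijection from the nonempty faces of $\dome{F}$ (including $\dome{F}$ itself) to the bounded faces of $\extNewton{F}$ including $\emptyset$, rather than adjoining $\emptyset\mapsto\emptyset$; neither reading affects the duality of $\tropvariety{F}$ with $\downSigma{F}$, which you derive correctly.
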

Essentially this is a consequence of cone polarity.
Notice that the face poset of $\dome{F}$ is isomorphic with the poset of cells of the normal complex $\updome{F}$.

To explore the relationship of tropical with algebraic geometry here it suffices to consider one fixed field with a non-Archimedean valuation.
Its elements look as follows.
A \emph{formal Puiseux series} with complex coefficients is a power series of the form
\[
\gamma(t) \ = \ \sum_{k=m}^\infty a_k\cdot t^{k/N} \enspace ,
\]
where $m,N\in\ZZ$, $N>0$ and $a_k\in\CC$.
These formal power series with rational exponents can be added and multiplied in the usual way to yield an algebraically closed field of characteristic zero, which we denote as $\puiseuxseries{\CC}{t}$.
As a key feature there is a map $\val:\puiseuxseries{\CC}{t}\to\QQ$ that sends a Puiseux series to its lowest exponent.
This \emph{valuation map} satisfies
\[
\begin{aligned}
  \val(\gamma(t)+\delta(t)) \ &\geq \ \min\bigl(\val(\gamma(t)),\val(\delta(t))\bigr) \ = \ \val(\gamma(t)) \oplus \val(\delta(t)) \quad \text{and}\\
  \val(\gamma(t)\cdot\delta(t)) \ &= \ \val(\gamma(t))+\val(\delta(t)) \ = \ \val(\gamma(t)) \odot \val(\delta(t)) \enspace .
\end{aligned}
\]
We abbreviate $\puiseuxseries{\CC}{t}$ by $\KK$.
For any Laurent polynomial $f=\sum_{i\in I} \gamma_i(t)x^i$ in the ring $\KK[\xx{d}]$ its \emph{tropicalization} is the tropical polynomial
\[
\trop(f) \ := \ \bigoplus_{i\in I} \val(\gamma_i)\odot x^{\odot i} \enspace .
\]
The vanishing locus of $f$ is the hypersurface $\variety{f}:=\SetOf{x\in(\KK\setminus\{0\})^d}{f(x)=0}$ in the algebraic torus $(\KK\setminus\{0\})^d$.
In the sequel we consider a Laurent polynomial $f\in\KK[\xx{d}]$ and its tropicalization $F=\trop(f)$.
The following key result has been obtained by Kapranov; see~\cite{EinsiedlerKapranovLind06}.
\begin{theorem}[Fundamental Theorem of Tropical Geometry]\label{thm:fundamental}
  For every Laurent polynomial $f \in \KK[\xx{d}]$ we have
  \begin{equation}\label{eq:fundamental}
    \overline{\val(\variety{f})} \ = \ \tropvariety{\trop(f)} \enspace.
  \end{equation}
\end{theorem}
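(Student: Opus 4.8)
The plan is to prove the two inclusions in \eqref{eq:fundamental} separately; the containment ``$\subseteq$'' is elementary, whereas ``$\supseteq$'' carries all the weight. For ``$\subseteq$'', write $f=\sum_{i\in I}\gamma_i(t)\,x^i$ and let $x\in\variety{f}$ with $w:=\val(x)\in\QQ^d$. The $i$-th summand of $f(x)$ has valuation $\val(\gamma_i\,x^i)=\val(\gamma_i)+i\cdot w$. If the minimum of these numbers over $i\in I$ were attained at a \emph{unique} index $i_0$, then $\gamma_{i_0}x^{i_0}$ would be the unique summand of $f(x)$ of smallest valuation; since in a valued field the valuation of a finite sum equals that of a strictly dominant summand, we would obtain $\val(f(x))=\val(\gamma_{i_0})+i_0\cdot w<\infty$ and hence $f(x)\neq 0$ --- a contradiction. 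So the minimum defining $\trop(f)(w)$ is attained at least twice, i.e.\ $w\in\tropvariety{\trop(f)}$. Since $\tropvariety{\trop(f)}$ is a finite polyhedral complex in $\RR^d$ it is closed, and therefore contains $\overline{\val(\variety{f})}$.

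For ``$\supseteq$'' the idea is to realize a dense subset of $\tropvariety{\trop(f)}$ as genuine valuations of points of $\variety{f}$. The coefficients of $\trop(f)$ are the rational numbers $\val(\gamma_i)$ and its exponents are integral, so $\tropvariety{\trop(f)}$ is a $\QQ$-rational polyhedral complex; hence $\QQ^d\cap\tropvariety{\trop(f)}$ is dense in it, and it suffices to prove that every such rational point $w$ lies in $\val(\variety{f})$. The monomial substitution $x_i\mapsto t^{w_i}x_i$ replaces $f$ by $\tilde f(x)=\sum_{i\in I}\bigl(\gamma_i\,t^{w\cdot i}\bigr)x^i$, and a short computation gives $\trop(\tilde f)(v)=\trop(f)(v+w)$ and $\variety{\tilde f}=t^{-w}\cdot\variety{f}$; in particular $w\in\val(\variety{f})$ iff $\0\in\val(\variety{\tilde f})$, and $w\in\tropvariety{\trop(f)}$ iff $\0\in\tropvariety{\trop(\tilde f)}$. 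So we may assume $w=\0$.

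With $w=\0$, divide $f$ by a suitable power of $t$ so that $\min_{i\in I}\val(\gamma_i)=0$. The hypothesis $\0\in\tropvariety{\trop(f)}$ then says that this minimum is attained at least twice, so the initial form $\bar f:=\sum_{\val(\gamma_i)=0}\overline{\gamma_i}\,x^i\in\CC[\xx{d}]$ involves at least two monomials; its support therefore contains two distinct exponent vectors, so after renumbering the variables we may assume that $\bar f$ genuinely depends on $x_d$. Consequently, for a general point $(u_1,\dots,u_{d-1})\in(\CC^{*})^{d-1}$ the one-variable Laurent polynomial $\bar f(u_1,\dots,u_{d-1},x_d)\in\CC[x_d^{\pm}]$ is nonzero and is not a monomial, hence --- $\CC$ being algebraically closed --- it has a root $u_d\in\CC^{*}$. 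Now specialize the first $d-1$ coordinates and set $h(x_d):=f(u_1,\dots,u_{d-1},x_d)\in\KK[x_d^{\pm}]$; its coefficients have non-negative valuation, and its reduction modulo the maximal ideal of the valuation ring of $\KK$ equals $\bar f(u_1,\dots,u_{d-1},x_d)$, which is nonzero and has the nonzero root $u_d$. Hence the Newton polygon of $h$ has a horizontal edge of positive length, and since $\KK$ is algebraically closed this forces $h$ to have a root $\xi\in\KK$ with $\val(\xi)=0$ --- this last step being precisely the one-variable case of the statement, i.e.\ the classical Newton--Puiseux theorem. The point $x:=(u_1,\dots,u_{d-1},\xi)$ then lies in $\variety{f}$ and satisfies $\val(x)=\0=w$, as required.

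I expect the reverse inclusion to be the main obstacle, and within it the passage from ``the initial form $\bar f$ is not a monomial'' to an honest Puiseux-series zero of $f$: one needs that a non-monomial Laurent polynomial over an algebraically closed field vanishes somewhere on the torus (which is reduced here, after a generic specialization, to the obvious one-variable statement), together with the Newton--Puiseux analysis of the resulting univariate problem over $\KK$. The remaining ingredients --- $\QQ$-rationality, and hence density of the rational points, of $\tropvariety{\trop(f)}$, and the compatibility of the rescaling $x_i\mapsto t^{w_i}x_i$ with $\val$, $\variety{\cdot}$, $\trop$ and $\tropvariety{\cdot}$ --- are routine verifications.
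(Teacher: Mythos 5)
The paper does not prove Theorem~\ref{thm:fundamental} at all: it is stated as a known result and attributed to Kapranov, with the reference \cite{EinsiedlerKapranovLind06} (and \cite[\S3.2]{Tropical+Book} for the generalization to ideals). So there is no in-paper proof to compare against.

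That said, your proposal is essentially the standard proof of Kapranov's theorem, and it is correct. The easy inclusion via the ultrametric inequality is fine. For the hard inclusion, the chain of reductions --- density of rational points in the $\QQ$-rational polyhedral complex $\tropvariety{\trop(f)}$; the monomial rescaling $x_i\mapsto t^{w_i}x_i$ to move $w$ to $\0$; normalizing so $\min_i\val(\gamma_i)=0$; passing to the initial form $\bar f$ over the residue field $\CC$; a generic specialization of $x_1,\dots,x_{d-1}$ to reduce to one variable; and finally a Newton-polygon argument over $\KK=\puiseuxseries{\CC}{t}$ to lift the residue root to a Puiseux root of valuation zero --- is exactly the argument in the literature (cf.\ \cite[Thm.~3.1.3]{Tropical+Book}). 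Two small points you glide over but which do hold: the generic-specialization step needs that all the $x_d$-coefficient Laurent polynomials of $\bar f$ that are nonzero stay nonzero for $(u_1,\dots,u_{d-1})$ outside a proper Zariski-closed subset of $(\CC^*)^{d-1}$, which guarantees the specialized univariate Laurent polynomial has the same (at least two-element) $x_d$-support; and the horizontal lower edge of the Newton polygon of $h$ exists precisely because $\val(h_j)\geq 0$ for all $j$ with equality at those $\geq 2$ indices. Both are fine as you use them. In short: correct proof, standard route, supplied where the paper only cites.
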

Here the valuation map $\val$ is applied element-wise and coordinate-wise to the points in the hypersurface $\variety{f}$, and here $\overline{\ \cdot \ }$ denotes the topological closure in $\RR^d$.
It should be noted that the Fundamental Theorem admits a generalization to arbitrary ideals in $\KK[\xx{d}]$; see \cite[\S3.2]{Tropical+Book}.
The hypersurface case corresponds to the principal ideals.
Now let us consider two Laurent polynomials $f,g\in\KK[\xx{d}]$ with tropicalizations $F=\trop(f)$ and $G=\trop(g)$.
\begin{lemma} \label{lem:union} 
  We have
  \[
  \tropvariety{F\odot G} \ = \ \tropvariety{\trop(f\cdot g)} \ = \ \tropvariety{F}\cup\tropvariety{G} \enspace .
  \]
\end{lemma}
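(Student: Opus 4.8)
The statement bundles two facts of quite different character, and the plan is to prove them one at a time. The union description $\tropvariety{\trop(f\cdot g)}=\tropvariety{F}\cup\tropvariety{G}$ will follow from the min-plus arithmetic together with Theorem~\ref{thm:fundamental}; the first equality, $\tropvariety{F\odot G}=\tropvariety{\trop(f\cdot g)}$, is the genuinely subtle one.

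First I would handle the combinatorial part. Writing $F=\bigoplus_{i}c_i\odot x^{\odot i}$ and $G=\bigoplus_{j}d_j\odot x^{\odot j}$, one has $(F\odot G)(x)=F(x)+G(x)$ for every $x\in\RR^d$, and an exponent $k$ attains the minimum of $F\odot G$ at $x$ precisely when $k=i+j$ for some $i$ attaining the minimum of $F$ at $x$ and some $j$ attaining the minimum of $G$ at $x$ (since a sum of two quantities equals the sum of their separate minima exactly when each summand is individually minimal). Thus the set of minimizing exponents of $F\odot G$ at $x$ is the Minkowski sum $\argmin_i(c_i+i\cdot x)+\argmin_j(d_j+j\cdot x)$ of the two finite nonempty minimizer sets. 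A Minkowski sum of finite nonempty sets contains two distinct elements iff at least one of the summands does (for fixed $b_0$ the translation $a\mapsto a+b_0$ is injective), so $F\odot G$ vanishes at $x$ iff $F$ vanishes at $x$ or $G$ vanishes at $x$; this yields $\tropvariety{F\odot G}=\tropvariety{F}\cup\tropvariety{G}$.

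Next I would bring in the Fundamental Theorem to handle $\trop(f\cdot g)$. Because $\KK$ is a field, a torus point $x$ satisfies $(f\cdot g)(x)=0$ iff $f(x)=0$ or $g(x)=0$, whence $\variety{f\cdot g}=\variety{f}\cup\variety{g}$. Applying $\val$ coordinate- and element-wise and taking topological closures in $\RR^d$ — using that the closure of a finite union is the union of the closures — gives $\overline{\val(\variety{f\cdot g})}=\overline{\val(\variety{f})}\cup\overline{\val(\variety{g})}$. By Theorem~\ref{thm:fundamental}, invoked for $f\cdot g$, for $f$, and for $g$, the left-hand side equals $\tropvariety{\trop(f\cdot g)}$ and the right-hand side equals $\tropvariety{F}\cup\tropvariety{G}$, which by the previous paragraph equals $\tropvariety{F\odot G}$. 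Chaining the two identities gives the full statement.

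The step I expect to be the main obstacle is exactly the first equality, $\tropvariety{F\odot G}=\tropvariety{\trop(f\cdot g)}$. It is tempting to claim $\trop(f\cdot g)=\trop(f)\odot\trop(g)=F\odot G$, but tropicalization is not multiplicative at the level of polynomials: additive cancellation in $\KK$ can make the coefficient $\val\bigl(\sum_{i+j=k}\gamma_i\delta_j\bigr)$ of $\trop(f\cdot g)$ strictly larger than the coefficient $\min_{i+j=k}(\val\gamma_i+\val\delta_j)$ of $F\odot G$. A direct argument would then have to check that at every point of the vanishing locus the monomials on which the two polynomials disagree are never among those attaining the minimum twice, so that the two hypersurfaces still coincide. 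Passing through Theorem~\ref{thm:fundamental} avoids this bookkeeping, at the price of invoking the correspondence between tropical and classical hypersurfaces.
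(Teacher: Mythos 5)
Your proof is correct and takes a genuinely different route from the paper's. The paper asserts that ``a direct computation shows that $\trop(f\cdot g)$ equals $F\odot G$'' and then obtains the union description from the classical identity $\variety{f\cdot g}=\variety{f}\cup\variety{g}$ together with Theorem~\ref{thm:fundamental}. You keep that latter step essentially verbatim, but replace the former with a purely tropical identity: the minimizing exponents of $F\odot G$ at $x$ form the Minkowski sum of the two argmin sets for $F$ and $G$, and a Minkowski sum of finite nonempty sets has at least two elements iff one of the summands does, which gives $\tropvariety{F\odot G}=\tropvariety{F}\cup\tropvariety{G}$ with no reference to $\KK$ at all; transitivity then delivers $\tropvariety{F\odot G}=\tropvariety{\trop(f\cdot g)}$ without ever comparing the two tropical polynomials directly. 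Your closing caveat about cancellation is well-placed: $\trop(f\cdot g)$ and $F\odot G$ can indeed differ as \emph{formal} tropical polynomials, because $\val\bigl(\sum_{i+j=k}\gamma_i\delta_j\bigr)$ can exceed $\min_{i+j=k}\bigl(\val(\gamma_i)+\val(\delta_j)\bigr)$ when leading terms cancel. What the paper's ``direct computation'' actually establishes --- and this is slightly subtler than the wording suggests --- is that the two agree as tropical polynomial \emph{functions}: for each $x$ one can choose lexicographically extremal minimizers $i^*$ of $F$ and $j^*$ of $G$, so that $(i^*,j^*)$ is the unique minimizing decomposition of $k^*=i^*+j^*$, the $k^*$-th coefficient suffers no cancellation, and the function values coincide; equality of the functions already forces equality of the hypersurfaces. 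Your argmin route sidesteps that cancellation bookkeeping entirely and keeps the combinatorial half of the lemma self-contained on the tropical side, which is arguably the cleaner exposition; both arguments are sound.
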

\begin{proof}
  A direct computation shows that $\trop(f\cdot g)$ equals $F\odot G$.
  As $\variety{f\cdot g}=\variety{f}\cup\variety{g}$ holds classically, the claim follows from Theorem~\ref{thm:fundamental}.
\end{proof}
The next result says that tropicalization commutes with forming unions of (tropical) hypersurfaces.
\begin{proposition} \label{prop:union_diagram}
  The diagram
  \begin{equation}\label{eq:puiseux:union_diagram}
    \begin{tikzpicture}[%
  node distance=2.75cm, auto,
  vertical/.style={node distance=2cm}	
]

\node (Vf) {$\variety{f}$};
\node (Vfg) [right of=Vf] {$\variety{f\cdot g}$};
\node (Vg) [right of=Vfg]{$\variety{g}$};

\draw[->] (Vf) -- (Vfg);
\draw[->] (Vg) -- (Vfg);

\node (Tf) [vertical, below of=Vf]{$\tropvariety{F}$};
\node (Tfg) [vertical, below of=Vfg] {$\tropvariety{F\odot G}$};
\node (Tg) [vertical, below of=Vg]{$\tropvariety{G}$};

\draw[->] (Tf) -- (Tfg);
\draw[->] (Tg) -- (Tfg);

\draw[->] (Vf) to node {$\val$} (Tf);
\draw[->] (Vfg) to node {$\val$} (Tfg);
\draw[->] (Vg) to node {$\val$} (Tg);

\node (Df) [vertical, below of=Tf]{$\partial\dome{F}$};
\node (Dfg) [vertical, below of=Tfg] {$\partial\dome{F\odot G}$};
\node (Dg) [vertical, below of=Tg]{$\partial\dome{G}$};

\draw[->] (Df) to node {$\odot G$} (Dfg);
\draw[<-] (Dfg) to node {$\odot F$} (Dg);

\draw[->] (Tf) to node {$\id\times F$} (Df);
\draw[->] (Tfg) to node {$\id\times(F\odot G)$} (Dfg);
\draw[->] (Tg) to node {$\id\times G$} (Dg);

\end{tikzpicture}
  \end{equation}
  commutes.
  The map $\odot G$ sends a point $(w,s)\in\RR^{d+1}$ to $(w,s+G(w))$, and $\odot F$ is similarly defined.
  The unmarked horizontal arrows are embeddings of subsets.
\end{proposition}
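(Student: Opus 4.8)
The plan is to check that the diagram commutes square by square. Cutting it along the middle row $\tropvariety{F}\to\tropvariety{F\odot G}\leftarrow\tropvariety{G}$, it splits into two ``valuation'' squares on top and two ``dome'' squares on the bottom; since the whole picture is symmetric under interchanging $f\leftrightarrow g$ (hence $F\leftrightarrow G$), it suffices to treat the left square of each pair. Everything in sight is a map of sets, so ``commutes'' simply means that the two relevant composites agree pointwise.

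First I would pin down why each arrow lands where it is drawn. The top horizontal maps are the inclusions $\variety{f},\variety{g}\subseteq\variety{f\cdot g}$, valid classically because a product vanishes wherever a factor does. The middle horizontal maps are the inclusions $\tropvariety{F},\tropvariety{G}\subseteq\tropvariety{F\odot G}$, which are precisely Lemma~\ref{lem:union}. Each vertical $\val$ is the restriction of one and the same coordinate-wise valuation $(\KK\setminus\{0\})^d\to\QQ^d\subseteq\RR^d$; that it carries $\variety{f}$ into $\tropvariety{F}$ (and similarly for $g$ and for $f\cdot g$) is part of Theorem~\ref{thm:fundamental}, since $\val(\variety{f})\subseteq\overline{\val(\variety{f})}=\tropvariety{F}$. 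For the bottom row, note that $\dome{F}$ is the hypograph of the concave function $F$, so $\partial\dome{F}$ is exactly the graph $\{(w,F(w)):w\in\RR^d\}$; hence $\id\times F$ maps $\tropvariety{F}\subseteq\RR^d$ into $\partial\dome{F}$, and likewise $\id\times G$ and $\id\times(F\odot G)$ are well defined. Finally $\odot G\colon(w,s)\mapsto(w,s+G(w))$ sends $(w,F(w))\in\partial\dome{F}$ to $(w,F(w)+G(w))=(w,(F\odot G)(w))\in\partial\dome{F\odot G}$, because tropical multiplication is ordinary addition; the same holds for $\odot F$ on $\partial\dome{G}$.

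With well-definedness established, the two commutativities are one line each. For the top-left square, a point $x\in\variety{f}$ is sent by both composites to its coordinate-wise valuation regarded as an element of $\tropvariety{F\odot G}$, because the horizontal maps are literal inclusions and $\val$ is the restriction of the same map along $\variety{f}\subseteq\variety{f\cdot g}$. For the bottom-left square, take $w\in\tropvariety{F}\subseteq\tropvariety{F\odot G}$: going right then down gives $(w,(F\odot G)(w))$, while going down then right gives $(w,F(w)+G(w))$, and these agree since $(F\odot G)(w)=F(w)\odot G(w)=F(w)+G(w)$. The right-hand squares follow by swapping the roles of $F$ and $G$, so the full diagram commutes.

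I expect the only real care to go into the bookkeeping of the vertical maps rather than into the commutativity identities themselves: one must invoke Theorem~\ref{thm:fundamental} to see that $\val$ does not leave $\tropvariety{F}$, and one must use that $\dome{F}$ is the hypograph of $F$ to identify $\partial\dome{F}$ with the graph of $F$, which is what makes both $\id\times F$ and $\odot G$ land in the correct polyhedra. Once these identifications are in place, each of the four squares collapses to the trivial observation that two descriptions of the same set map coincide.
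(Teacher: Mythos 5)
Your proposal is correct and follows essentially the same route as the paper: the upper squares are handled by the Fundamental Theorem together with Lemma~\ref{lem:union}, and the lower squares reduce to the identification of $\partial\dome{F}$ with the graph of $F$ and the one-line computation $\odot G(w,F(w))=(w,F(w)+G(w))=(w,(F\odot G)(w))$, which is exactly the paper's equation~(\ref{eq:union:polyhedra}). The extra care you take with the well-definedness of the vertical maps (e.g.\ $\val(\variety{f})\subseteq\overline{\val(\variety{f})}=\tropvariety{F}$) is a welcome elaboration of what the paper leaves implicit.
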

\begin{proof}
  The upper two squares in the diagram commute due to the Fundamental Theorem.
  We focus on the lower left square; the lower right one is similar.
  Let $w\in \tropvariety{F}$.
  The latter is contained in $\tropvariety{F\odot G}=\tropvariety{F}\cup\tropvariety{G}$ by Lemma~\ref{lem:union}.
  Evaluating $F$ at $w$ yields the point $(w,F(w))$ in the codimension-$2$-skeleton of the dome $\dome{F}\subset\RR^{d+1}$, which is part of the boundary.
  Any point in the boundary of $\dome{F}$ has the form $(v,F(v))$ for some $v\in\RR^d$.
  We can check that
  \begin{equation}\label{eq:union:polyhedra}
    \odot G(v,F(v)) \ = \ (v,F(v)+G(v)) \ = \ (v,F\odot G(v)) \enspace ,
  \end{equation}
  and thus $\odot G$, indeed, maps arbitrary points in the boundary of $\dome{F}$ to boundary points of $\dome{F\odot G}$.
  Setting $v=w$ in (\ref{eq:union:polyhedra}) now finishes the proof.
\end{proof}
\begin{remark}\label{rem:refinement}
  From (\ref{eq:union:polyhedra}) it also follows that the normal complex $\updome{F\odot G}$ is the common refinement of the normal complexes $\updome{F}$ and $\updome{G}$.
\end{remark}

The vertices of the regular subdivision $\downSigma{F\odot G}$ are sums of one point in $\supp(F)$ with one point in $\supp(G)$, i.e., they correspond to products of a monomial in $f$ with a monomial in $g$.
Altogether the Cayley embedding of the monomials of the factors, seen as configurations of lattice points, project to the monomials in the product.
Now, via the Cayley Trick, any regular subdivision of the Cayley embedding induces a coherent subdivision of the Minkowski sum.
\begin{corollary}\label{cor:downmixed}
  The regular subdivision $\downSigma{F\odot G}$ is a coherent mixed subdivision of $\supp(F)+\supp(G)$.
\end{corollary}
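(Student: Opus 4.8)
The plan is to exhibit $\downSigma{F\odot G}$ as the image $M(\Sigma)$ of a single regular subdivision $\Sigma$ of the Cayley embedding $\Cayley(\supp F,\supp G)$ and then quote the Cayley Trick; recall that in the terminology of~\cite{Triangulations} ``coherent'' is a synonym of ``regular''. As a preliminary step I would record the effect of the tropical product on supports and coefficients. Writing $F=\bigoplus_{m\in I}c_m\odot x^{\odot m}$ and $G=\bigoplus_{m'\in J}c'_{m'}\odot x^{\odot m'}$, the identity $\trop(f\cdot g)=F\odot G$ established in the proof of Lemma~\ref{lem:union} shows that $\supp(F\odot G)=\supp(F)+\supp(G)$ is a Minkowski sum of lattice point configurations and that the coefficient of $F\odot G$ at a point $p$ is the tropical inf-convolution
\[
(c\,\square\,c')(p)\ =\ \min\smallSetOf{c_m+c'_{m'}}{m\in I,\ m'\in J,\ m+m'=p}\enspace ,
\]
so that $\extNewton{F\odot G}=U\bigl(\supp(F)+\supp(G),\,c\,\square\,c'\bigr)$.

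Next I would equip the Cayley embedding $\Cayley(\supp F,\supp G)\subset\RR^{d+1}$ with the lifting function $\lambda$ given by $\lambda(m,-1)=c_m$ for $m\in\supp F$ and $\lambda(m',1)=c'_{m'}$ for $m'\in\supp G$; this is a special case of~\eqref{eq:extendedNewton}, and it produces the regular subdivision $\Sigma:=\downSigma{\Cayley(\supp F,\supp G),\lambda}$. By the Cayley Trick, and in particular by the clause that $M$ takes regular subdivisions to regular subdivisions, $M(\Sigma)$ is a coherent mixed subdivision of $\supp(F)+\supp(G)$. Thus it remains only to identify $M(\Sigma)$ with $\downSigma{F\odot G}$; this identification is simultaneously what equips the a priori plain polyhedral subdivision $\downSigma{F\odot G}$ with its mixed-cell labels.

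For the identification, fix $w\in\RR^d$ and set $A_w:=\argmin_{m\in I}(m\cdot w+c_m)$ and $B_w:=\argmin_{m'\in J}(m'\cdot w+c'_{m'})$. On the one hand,
\[
\min_{p}\bigl(p\cdot w+(c\,\square\,c')(p)\bigr)\ =\ \min_{m\in I}(m\cdot w+c_m)\ +\ \min_{m'\in J}(m'\cdot w+c'_{m'})\enspace ,
\]
and the minimizing points $p$ are exactly those of the form $m+m'$ with $m\in A_w$ and $m'\in B_w$, so the lower face of $\extNewton{F\odot G}$ selected by $w$ projects to the Minkowski cell $M(A_w,B_w)$ carrying the label set $A_w\times B_w$. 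On the other hand, a linear functional $(w,h)$ on $\RR^{d+1}$ evaluates a Cayley point $(m,-1)$ to $m\cdot w-h$ and a point $(m',1)$ to $m'\cdot w+h$; if $h=h(w)$ is chosen so that the two partial minima over $I$ and over $J$ coincide, the shift by $\pm h$ cancels in the sum and affects neither $\argmin$, so the lower face of $U(\Cayley(\supp F,\supp G),\lambda)$ selected by $(w,h(w))$ projects to the Cayley cell $\conv\Cayley(A_w,B_w)$. Intersecting with $\{x_{d+1}=0\}$ and ignoring the factor $\tfrac12$ (as agreed in Section~2) returns precisely $M(A_w,B_w)$ with labels $A_w\times B_w$. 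Hence $M(\Sigma)$ and $\downSigma{F\odot G}$ select the same labeled cell at every $w$, and therefore coincide as mixed subdivisions.

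I expect the genuine work to be confined to this last step: keeping the auxiliary Cayley coordinate $h$ and the scaling factor $\tfrac12$ from obscuring that the $\argmin$ data on the two sides match verbatim, and verifying completeness of the $w\mapsto$ cell parametrizations on both sides so that equality of subdivisions — not merely of collections of cells — follows. The tropical hypersurface picture (Remark~\ref{rem:refinement}, Proposition~\ref{prop:union_diagram}) is consistent with all of this but is not needed here.
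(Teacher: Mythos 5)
Your proposal is correct and follows essentially the same route the paper indicates: the paper's ``proof'' is only the short paragraph preceding the corollary, which observes that the monomials of $F\odot G$ are sums of monomials of $F$ and $G$, lifts the Cayley embedding by the coefficients, and invokes the Cayley Trick. You have simply spelled out the details of that sketch, in particular the inf-convolution formula for the coefficients of $F\odot G$ and the argmin-matching argument (with the auxiliary height $h(w)$ on the extra Cayley coordinate) that identifies $\downSigma{F\odot G}$ with $M(\Sigma)$ cell by cell.
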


Classically, varieties defined by homogeneous polynomials are studied in the projective space.
Here the situation is similar.
A tropical polynomial $F$ is \emph{homogeneous} of degree $\delta$ if its support is contained in the affine hyperplane $\sum x_i=\delta$.
For such $F$ the tropical hypersurface $\tropvariety{F}$ can be seen as a subset of the \emph{tropical projective torus} $\RR^d/\RR\1$.
That set is homeomorphic with $\RR^{d-1}$, and it has a natural compactification, the \emph{tropical projective space}
\[
\TP^{d-1} \ := \ \left(\bigl(\RR\cup\{\infty\}\bigr)^d\setminus\{\infty\1\}\right)/\RR\1 \enspace .
\]

Below we will also look into the set $\maxTT=\RR\cup\{-\infty\}$ with $\max$ as the addition instead of $\min$.
These two versions of the tropical semiring are isomorphic via $-\min(x,y)=\max(-x,-y)$.
Hence the results of this section, suitably adjusted, also hold for max-tropical polynomials.
To avoid confusion we will use ``min'' or ``max'' as subscripts wherever necessary.
As far as the regular subdivisions are concerned, for min we look at the lifted points from ``below'', while for max we look from ``above''.
To mark this difference we write $\upSigma{H}$ and $\downdome{H}$, if $H$ is a max-tropical polynomial, and we have
\begin{equation}\label{eq:updown}
  \upSigma{H} \ = \ \downSigma{-H} \quad \text{and} \quad \downdome{H} \ = \ -\updome{-H} \enspace ,
\end{equation}
where $-H$ is the min-tropical polynomial that arises from $H$ be replacing each coefficient by its negative, and the minus in front of the polyhedral complex on the right refers to reflection at the origin of $\RR^d$.

\section{Arrangements of Tropical Hyperplanes}
\label{sec:arrangements}
\noindent
The simplest kind of algebraic hypersurfaces are the hyperplanes, i.e., the linear ones.
Arrangements of hyperplanes is a classical topic with a rich connection to algebraic geometry, group theory, topology and combinatorics.
A standard reference is the monograph \cite{OrlikTerao:1992} by Orlik and Terao.
The tropicalization of hyperplane arrangements was pioneered by Develin and Sturmfels~\cite{DevelinSturmfels04}.
The Cayley Trick will sneak into the discussion through Corollary~\ref{cor:downmixed}.

Let $V=(v_{ik})\in\minTT^{d\times n}$ be a matrix whose coefficients are real numbers or $\infty$.
Throughout we will assume that each column contains at least one finite entry.
Writing $v^{(k)}$ for the $k$-column this means that $v^{(k)}+\RR\1$ is a point in the tropical projective space $\minTP^{d-1}$.

The \emph{negative} column vector $-v^{(k)}$ is an element of $(\RR\cup\{-\infty\})^d$, and it defines the homogeneous \emph{max}-tropical linear form
\[
\begin{split}
  (-v_{1k})\odot x_1 \ \maxoplus \ (-v_{2k})\odot x_2 \ \maxoplus \ \cdots \ \maxoplus \ (-v_{dk})\odot x_d \\
  = \ \max\{ x_1-v_{1k}, x_2-v_{2k}, \dots, x_d-v_{dk}\} \enspace ,
\end{split}
\]
which we will identify with $-v^{(k)}$.
Since we assumed that $v^{(k)}$ has at least one finite coefficient that tropical linear form is not trivial.
The tropical variety $\smallmaxtropvariety{-v^{(k)}}$ is a max-tropical hyperplane and, by Lemma~\ref{lem:union}, the tropical variety associated with the product of linear forms
\[
\begin{split}
  \maxtropvariety{-V} \ :=& \ \maxtropvariety{(-v^{(1)}) \odot (-v^{(2)}) \odot \dots \odot (-v^{(n)})} \\
  =& \ \smallmaxtropvariety{-v^{(1)}} \cup \smallmaxtropvariety{-v^{(2)}} \cup \dots \cup \smallmaxtropvariety{-v^{(n)}}
\end{split}
\]
is a union of tropical hyperplanes.

The support $\supp(-v^{(k)})$ is a subset of the vertices of the ordinary standard simplex $\Delta_{d-1}=\conv(e_1,e_2,\dots,e_d)$ in $\RR^d$.
So the Newton polytope of the tropical linear form $-v^{(k)}$ is a face of $\Delta_{d-1}$
If that Newton polytope is the entire simplex of dimension $d-1$ then the normal complex $\updome{v^{(k)}}$ has a unique vertex that is contained in $d$ maximal cells which are dual to the vertices of $\Delta_{d-1}$.
Notice that, following standard practice in polyhedral geometry \cite{Triangulations}, even in the max-tropical setting, we usually prefer to look at regular subdivisions from ``below'', and \eqref{eq:updown} takes care of the translation.
Thus we study $\updome{v^{(k)}}$ rather than its image $\downdome{-v^{(k)}}$ under reflection.
The following result is a consequence of the Cayley Trick in the guise of Corollary~\ref{cor:downmixed}.

\begin{proposition}\label{prop:downsigma}
  The regular subdivision $\downSigma{V}$, which is dual to the $\max$-tropical hypersurface $\maxtropvariety{-V}$, coincides with the mixed subdivision
  \[
  M\bigl(\smalldownSigma{v^{(1)}},\smalldownSigma{v^{(2)}},\dots,\smalldownSigma{v^{(n)}}\bigr) \enspace .
  \]
\end{proposition}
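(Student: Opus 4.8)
The plan is to obtain the proposition as the $n$-fold, max-tropical incarnation of Corollary~\ref{cor:downmixed}, the actual content being to pin down \emph{which} mixed subdivision occurs and to pass from two factors to $n$. To begin, I would make the defining polynomial explicit. By construction $\maxtropvariety{-V}$ is the max-tropical hypersurface of $L := (-v^{(1)})\maxodot\cdots\maxodot(-v^{(n)})$, and a direct computation as in the proof of Lemma~\ref{lem:union} shows that $\supp(L) = \supp(-v^{(1)})+\cdots+\supp(-v^{(n)})$ and that the coefficient of $L$ at a lattice point $p$ is $\max\bigl\{\,-\sum_k v_{i_k,k}\ \colon\ \sum_k e_{i_k}=p\,\bigr\}$, i.e.\ the (max-)convolution of the $n$ column lifts. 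By the (unlabelled) proposition relating $\dome{\cdot}$ with $\extNewton{\cdot}$, the hypersurface $\maxtropvariety{L}$ is dual to $\upSigma{L}$, which is precisely the regular subdivision of $\supp(L)$ induced by that convolution lift; via \eqref{eq:updown}, and after the reflection at the origin already built into the paper's conventions, this is exactly the subdivision denoted $\downSigma{V}$ (and likewise $\upSigma{-v^{(k)}}$ becomes $\downSigma{v^{(k)}}$).

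Next, I would bring in the $n$-fold Cayley Trick. Equip the Cayley embedding $\Cayley(\supp(-v^{(1)}),\dots,\supp(-v^{(n)}))$ --- which by \eqref{eq:product} sits inside $\{e_1,\dots,e_d\}\times\Delta_{n-1}$ --- with the ``stacked'' height function whose restriction to the $k$-th copy of $\supp(-v^{(k)})$ is the column lift $v^{(k)}$, and let $\Sigma$ be the regular subdivision it induces. Since each copy is a face of the Cayley polytope, $\Sigma$ restricts there to $\downSigma{v^{(k)}}$, so by definition $M(\Sigma) = M\bigl(\downSigma{v^{(1)}},\dots,\downSigma{v^{(n)}}\bigr)$. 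On the other hand, the computation underlying the Cayley Trick --- the one in the proofs of Corollary~\ref{cor:downmixed} and Proposition~\ref{prop:union_diagram}, carried out with $n$ factors rather than two, or \cite[\S9.2]{Triangulations} --- shows that pushing the stacked lift down to the Minkowski sum yields exactly the convolution of the $v^{(k)}$; hence $M(\Sigma)$ is the regular subdivision of $\supp(-v^{(1)})+\cdots+\supp(-v^{(n)})$ induced by that convolution, which by the first step is $\downSigma{V}$. Combining the two descriptions of $M(\Sigma)$ gives the claim.

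The one real obstacle is that Corollary~\ref{cor:downmixed} is stated only for two configurations and only asserts that $\downSigma{F\odot G}$ is \emph{some} coherent mixed subdivision, so I must upgrade it on two counts: identify which mixed subdivision appears, and pass from two factors to $n$. Both are routine once organised correctly: the product $(-v^{(1)})\maxodot\cdots\maxodot(-v^{(n)})$ and the convolution of the lifts are both associative, so an induction on $n$ reduces everything to the two-factor statement, and the only thing that needs checking at each inductive step is that the mixed-cell labels produced by the two-factor Cayley Trick are the expected products of labels of the factors. Everything else --- the min/max translation through \eqref{eq:updown}, the reflection at the origin, and the identification $\Cayley(A,\dots,A)\cong A\times\Delta_{n-1}$ with $A=\{e_1,\dots,e_d\}$ --- is bookkeeping that the discussion preceding the proposition has already prepared.
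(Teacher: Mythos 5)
Your argument is correct and follows exactly the route the paper intends: the paper states the proposition without proof, declaring it ``a consequence of the Cayley Trick in the guise of Corollary~\ref{cor:downmixed}'', and your write-up supplies precisely the missing details --- identifying the coefficients of the product linear form as the convolution of the column lifts, matching that convolution with the stacked lift on the Cayley embedding, and upgrading the two-factor statement to $n$ factors by associativity. The only point worth noting is that you correctly read the (undefined) notation $M\bigl(\smalldownSigma{v^{(1)}},\dots,\smalldownSigma{v^{(n)}}\bigr)$ as the mixed subdivision induced by the specific lifts $v^{(k)}$ rather than by the (trivial) individual subdivisions alone, which is the reading the proposition requires.
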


\begin{figure}[tb]\centering
  \includegraphics[height=.4\textheight]{trop-polytope-3.mps}

  \caption{Four max-tropical hyperplanes in $\RR^3/\RR\1$}
  \label{fig:arrangement}
\end{figure}

% $v1=hyperplane<Max>([0,-1,0]);
% $v2=hyperplane<Max>([0,-4,-1]);
% $v3=hyperplane<Max>([0,-3,-3]);
% $v4=hyperplane<Max>([0,0,-2]);
% print $v1->POLYNOMIAL * $v2->POLYNOMIAL * $v3->POLYNOMIAL * $v4->POLYNOMIAL;
% x0^4 + x0^3*x1 + x0^3*x2 + -1*x0^2*x1^2 + x0^2*x1*x2 + -1*x0^2*x2^2 + -4*x0*x1^3 + -2*x0*x1^2*x2 + -1*x0*x1*x2^2 + -3*x0*x2^3 + -8*x1^4 + -5*x1^3*x2 + -4*x1^2*x2^2 + -4*x1*x2^3 + -6*x2^4

\begin{example}\label{exmp:arrangement}
  For
  \[
  V \ = \ \begin{pmatrix} 0 & 0 & 0 & 0\\ 1 & 4 & 3 & 0\\ 0 & 1 & 3 & 2\end{pmatrix}
  \]
  the tropical hypersurface $\maxtropvariety{-V}$ is a union of four tropical lines in the tropical projective $2$-torus.
  The arrangement and the normal complex $\downdome{-V}$ is displayed in Figure~\ref{fig:arrangement}.
  In that drawing each point $(x_1,x_2,x_3)+\RR\1\in\RR^3/\RR\1$ occurs as $(x_2-x_1,x_3-x_1)$.

  The Newton polytope of each max-tropical linear form $-v^{(k)}$ is the standard triangle $\Delta_2$.
  The fourfold Minkowski sum is the dilated triangle $4{\cdot}\Delta_2$.
  The mixed subdivision $\upSigma{-V}$ is shown in Figure~\ref{fig:mixed}.
  That picture also shows the tropical line arrangement from Figure~\ref{fig:arrangement} embedded into the dual graph of the subdivision.

  According to Lemma~\ref{lem:coarse} each connected component of the complement of  $\maxtropvariety{-V}$ is marked with the corresponding element from the support set of the max-tropical polynomial
  \begin{equation}\label{eq:arrangement}
    \begin{split}
      (-v^{(1)}) \odot (-&v^{(2)}) \odot  (-v^{(3)}) \odot (-v^{(4)})\\
      = \ \max( & 4x_1 ,\, 3x_1+x_2 ,\, 3x_1+x_3 ,\, 2x_1+2x_2-1 ,\, 2x_1+x_2+x_3 ,\, 2x_1+2x_3-1 ,\,\\ & x_1+3x_2-4 ,\, x_1+2x_2+x_3-2 ,\, x_1+x_2+2x_3-1 ,\, x_1+3x_3-3 ,\,\\ & 4x_2-8 ,\, 3x_2+x_3-5 ,\, 2x_2+2x_3-4 ,\, x_2+3x_3-4 ,\, 4x_3-6 ) \enspace .
    \end{split}
  \end{equation}
  For instance evaluating the max-tropical polynomial \eqref{eq:arrangement} at the point $(0,2,0)$ yields the maximum $3$, which is attained at the tropical monomial $2x_1+2x_3-1$ with exponent vector $(2,2,0)$.
  In the language of \cite{DochtermannJoswigSanyal:2012} that vector is the ``coarse type'' of the corresponding (maximal) cell of $\upSigma{-V}$; see Remark~\ref{rem:separate} below.
  In the dual mixed subdivision we actually see $(2,2,0)$ as the coordinates of the lattice point dual to the maximal cell of $\downdome{-V}$ that contains the point $(0,2,0)$ in its interior.
\end{example}

\begin{figure}[tb]\centering
  \includegraphics[height=.4\textheight]{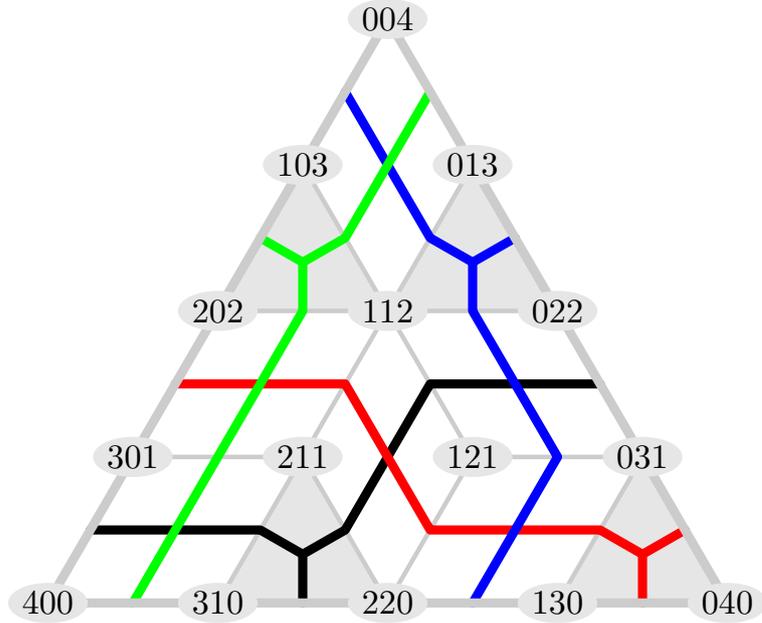}
  \caption{Mixed subdivision of $4{\cdot}\Delta_2$}
  \label{fig:mixed}
\end{figure}

We now turn to investigating a tropical version of convexity.
The set
\[
\mintpos(V) \ := \ \SetOf{\bigoplus_{k=1}^n \lambda_k \odot v^{(k)}}{\lambda_k\in\TT} \ = \ \SetOf{V\minodot\lambda}{\lambda\in\TT^n}
\]
is the $\min$-\emph{tropical cone} spanned by (the columns of) $V$.
It satisfies $\mintpos(V)=\mintpos(V)+\RR\1$, which is why it can be studied as a subset of the tropical projective space $\minTP^{d-1}$.
The image $\mintconv(V)$ of $\mintpos(V)$ under the canonical projection $\minTT^d\to\minTP^{d-1}$ is called a \emph{tropical polytope}.
In the sequel we will concentrate on the intersection
\[
\mintconvcirc(V) \ := \ \mintconv(V)\cap(\RR^d/\RR\1) \enspace ,
\]
which comprises the points with finite coordinates in the tropical polytope $\mintconv(V)$.

\begin{theorem}\label{thm:tconv}
  The set $\mintconvcirc(V)$ is a union of cells of the polyhedral complex
  \[
  \downdome{(-v^{(1)}) \odot (-v^{(2)}) \odot \dots \odot (-v^{(n)})}
  \]
  in $\RR^d/\RR\1$.
  If all coefficients of $V$ are finite then $\mintconvcirc(V)=\mintconv(V)$ is the union of those cells that are bounded.
\end{theorem}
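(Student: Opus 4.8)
The plan is to describe $\mintconvcirc(V)$ through \emph{types}, in the spirit of Develin and Sturmfels~\cite{DevelinSturmfels04}, and to identify the type decomposition with the complex $\downdome{(-v^{(1)})\odot\cdots\odot(-v^{(n)})}$. For $x\in\RR^d/\RR\1$ and a column index $k$ set $S_k(x):=\argmax_i(x_i-v_{ik})$ (maximum over the indices $i$ with $v_{ik}$ finite), and call the tuple $\type_V(x):=(S_1(x),\dots,S_n(x))$ the type of $x$. I would first observe that, up to the reflection of~\eqref{eq:updown}, each cell of the normal complex $\downdome{-v^{(k)}}$ is a region on which $S_k(\cdot)$ is constant (its relatively open maximal cells being those where $S_k(\cdot)$ is a singleton); this is Lemma~\ref{lem:coarse} applied to the linear form $-v^{(k)}$. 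By the max-tropical analogue of Remark~\ref{rem:refinement}, iterated over $k=1,\dots,n$, the complex $\downdome{(-v^{(1)})\odot\cdots\odot(-v^{(n)})}$ is the common refinement of the $\downdome{-v^{(k)}}$. Hence $\type_V$ is constant on the relative interior of each cell, the map (cell)$\mapsto$(type) is injective, and passing from a cell to one of its faces can only \emph{enlarge} each component $S_k$, since additional ties in the argmax appear in the limit.

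The algebraic heart is to compute which points lie in the cone. Given $y\in\RR^d$, put $\lambda^*_k:=(-v^{(k)})(y)=\max_i(y_i-v_{ik})$, which is finite because every column of $V$ has a finite entry. A short argument gives $y\in\mintpos(V)$ if and only if $V\minodot\lambda^*=y$: if $y=V\minodot\lambda$, then $v_{ik}+\lambda_k\ge y_i$ forces $\lambda_k\ge\lambda^*_k$, whence $y=V\minodot\lambda\ge V\minodot\lambda^*$, while $V\minodot\lambda^*\ge y$ holds by construction. Since $(V\minodot\lambda^*)_i\ge y_i$ always, with equality exactly when some $k$ satisfies $\lambda^*_k=y_i-v_{ik}$, i.e.\ exactly when $i\in S_k(y)$, one obtains
\[
y\in\mintpos(V)\quad\Longleftrightarrow\quad S_1(y)\cup\cdots\cup S_n(y)=[d]\enspace .
\]
Because $\mintpos(V)=\mintpos(V)+\RR\1$ and $y$ has finite coordinates, this is equivalent to $y+\RR\1\in\mintconvcirc(V)$. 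Thus $\mintconvcirc(V)$ is precisely the set of $x\in\RR^d/\RR\1$ whose type \emph{covers} $[d]$.

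Combining the two steps finishes the first assertion: $\mintconvcirc(V)$ is the union of the relative interiors of the cells of $\downdome{(-v^{(1)})\odot\cdots\odot(-v^{(n)})}$ whose type covers $[d]$, and since covering is inherited by faces (the $S_k$ only grow), this is a genuine subcomplex. For the second assertion assume all $v_{ik}$ finite. Then every $V\minodot\lambda$ with $\lambda\neq\infty\1$ has finite coordinates, so $\mintconv(V)=\mintconvcirc(V)$; and for a cell $C$ of type $T$ I would check that $C$ is bounded iff $T$ covers $[d]$: if $T$ covers, then for any indices $i,j$ choose $k$ with $i\in T_k$ and $k'$ with $j\in T_{k'}$, and the defining inequalities of $C$ yield $v_{ik}-v_{jk}\le x_i-x_j\le v_{ik'}-v_{jk'}$, so $C$ is bounded in $\RR^d/\RR\1$; conversely, if some $i_0\notin\bigcup_kT_k$, then translating a point of $C$ by $-te_{i_0}$ for $t\ge 0$ violates none of the defining inequalities of $C$, so $C$ is unbounded. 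Hence $\mintconv(V)$ is exactly the union of the bounded cells. The step I expect to be the main obstacle is the identification in the first paragraph — verifying that $\downdome{(-v^{(1)})\odot\cdots\odot(-v^{(n)})}$ really \emph{is} the type decomposition, keeping the min/max reflection bookkeeping of~\eqref{eq:updown} straight — whereas the boundedness dichotomy and the cone computation are routine once types are in place.
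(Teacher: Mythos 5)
The paper states Theorem~\ref{thm:tconv} without proof, deferring to \cite{DevelinSturmfels04} for finite coefficients and to \cite{FinkRincon:2015} and \cite{JoswigLoho:1503.04707} for the general case. Your argument is a correct reconstruction of the Develin--Sturmfels covector (``type'') proof, which is exactly the route the cited sources take: the Farkas-type characterization $y\in\mintpos(V)\Leftrightarrow\bigcup_k S_k(y)=[d]$ via the canonical lift $\lambda^*_k=(-v^{(k)})(y)$, the identification of the cells of $\downdome{(-v^{(1)})\odot\cdots\odot(-v^{(n)})}$ with covector classes by iterating the max-tropical analogue of Remark~\ref{rem:refinement}, and the boundedness dichotomy (covering type forces two-sided bounds on each $x_i-x_j$, non-covering type yields an unbounded ray in direction $-e_{i_0}$) are all correct and complete.
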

Theorem~\ref{thm:tconv} was proved by Develin and Sturmfels \cite{DevelinSturmfels04} for finite coefficients.
Extensions to the general case have been obtained by Fink and Rinc\'on~\cite{FinkRincon:2015} and by Loho and the author~\cite{JoswigLoho:1503.04707}.
As a consequence of Remark~\ref{rem:refinement} the normal complex $\downdome{(-v^{(1)}) \odot (-v^{(2)}) \odot \dots \odot (-v^{(n)})}$ in $\RR^d/\RR\1$ is the common refinement of the $n$ normal complexes $\smalldowndome{-v^{(1)}}$, $\smalldowndome{-v^{(2)}}$, $\ldots$, $\smalldowndome{-v^{(n)}}$; see Figure~\ref{fig:arrangement}.

\begin{figure}[tb]\centering
  \resizebox{!}{.4\textheight}{\begingroup

\newcommand\xmin{-1}
\newcommand\xmax{5}
\newcommand\ymin{-1}
\newcommand\ymax{4}

\begin{tikzpicture}[ x={(1.6cm,0cm)}, y={(0cm,1.6cm)} ]
  \Gitter{\xmin+0.1}{\xmax-0.1}{\ymin+0.1}{\ymax-0.1}
  \foreach \x in {0,...,4} {
    \draw[GitterStyle] (\x,\ymin) node [below] {$\x$};
  }
  \foreach \y in {0,...,3} {
    \draw[GitterStyle] (\xmin,\y) node [left] {$\y$};
  }

  \coordinate (p1) at (0,0);
  \coordinate (p2) at (1,0); % v^{(1)}
  \coordinate (p3) at (2,1); 
  \coordinate (p4) at (3,1);
  \coordinate (p5) at (4,1); % v^{(2)}
  \coordinate (p6) at (3,2);
  \coordinate (p7) at (3,3); % v^{(3)}
  \coordinate (p8) at (1,3);
  \coordinate (p9) at (0,2); % v^{(4)}
  \coordinate (p10) at (0,1);
  
  \filldraw[Poly] (p1) -- (p2) -- (p3) -- (p4) -- (p7) -- (p8) -- (p9) -- cycle;
  \draw[DrawPoly] (p10) -- (p3) -- (p6);
  \draw[DrawPoly] (p4) -- (p5);

  \dotlabel{BlackDot}{(p2)}{below right}{v^{(1)}}
  \dotlabel{RedDot}{(p5)}{above right}{v^{(2)}}
  \dotlabel{BlueDot}{(p7)}{above right}{v^{(3)}}
  \dotlabel{GreenDot}{(p9)}{above left}{v^{(4)}}
%  \dotlabel{WhiteDot}{(p1)}{below left}{w^{(1)}}
%  \dotlabel{WhiteDot}{(p3)}{below right}{w^{(2)}}
%  \dotlabel{WhiteDot}{(p4)}{above right}{w^{(3)}}
%  \dotlabel{WhiteDot}{(p6)}{above right}{w^{(4)}}
%  \dotlabel{WhiteDot}{(p8)}{above}{w^{(5)}}
%  \dotlabel{WhiteDot}{(p10)}{below left}{w^{(6)}}
\end{tikzpicture}
\endgroup

% Local Variables: 
% mode: latex
% mode: TeX-PDF
% TeX-master: "../etc.tex"
% buffer-file-coding-system:utf-8-unix
% End: }
  \caption{Tropical polytope generated by four points in $\RR^3/\RR\1$}
  \label{fig:polygon}
\end{figure}
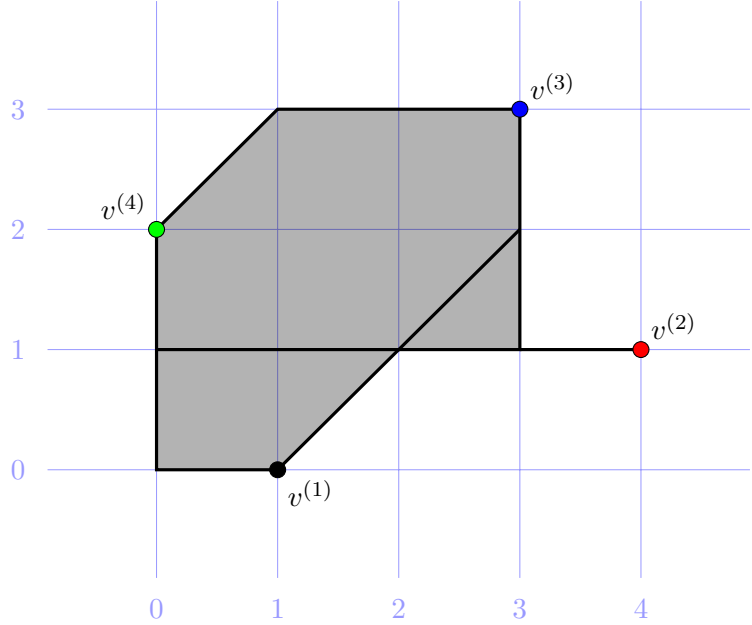

\begin{example}
  For the matrix $V$ from Example~\ref{exmp:arrangement} the tropical polytope $\mintconv(V)$ agrees with $\mintconvcirc(V)$, and it is shown in Figure~\ref{fig:polygon}.
  There are four bounded cells in the subdivision $\upSigma{-V}$, shown in Figure~\ref{fig:arrangement}, which are maximal with respect to inclusion, three of dimension two and one of dimension one.
  These form the tropical convex hull.
\end{example}

We want to come back to the Cayley Trick.
If all entries of the matrix $V\in\TT^{d\times n}$ are finite then the Newton polytope of the linear forms corresponding to each of the $n$ columns is the simplex $\Delta_{d-1}$.
In this case the Cayley embedding $\Cayley(\Delta_{d-1},\Delta_{d-1},\dots,\Delta_{d-1})$ is isomorphic with the product $\Delta_{d-1}\times\Delta_{n-1}$ of simplices.
\begin{corollary}\label{cor:product}
  If all entries of $V$ are finite then the regular mixed subdivision of $n{\cdot}\Delta_{d-1}$ from Proposition~\ref{prop:downsigma} is piecewise linearly isomorphic with a slice of the regular subdivision of $\Delta_{d-1}\times\Delta_{n-1}$ where the vertex $(e_i,e_k)$ is lifted to $v_{ik}$.
\end{corollary}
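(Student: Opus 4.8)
The plan is to read the statement as a direct instance of the Cayley Trick together with Proposition~\ref{prop:downsigma}; the only genuine work is the bookkeeping with lifting functions.

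First I would note that, since all entries of $V$ are finite, each max-tropical linear form $-v^{(k)}$ has support equal to the full vertex set $\{e_1,\dots,e_d\}$ of $\Delta_{d-1}$. Hence the Newton polytope of every column is $\Delta_{d-1}$, the relevant Minkowski sum is the dilate $n{\cdot}\Delta_{d-1}$, and, by Proposition~\ref{prop:downsigma}, the regular subdivision $\downSigma{V}$ dual to $\maxtropvariety{-V}$ equals the mixed subdivision $M(\smalldownSigma{v^{(1)}},\dots,\smalldownSigma{v^{(n)}})$ of $n{\cdot}\Delta_{d-1}$; here $\smalldownSigma{v^{(k)}}$ is by definition the regular subdivision of $\{e_1,\dots,e_d\}$ induced by lifting $e_i$ to $v_{ik}$.

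Next I would invoke the generalization of the Cayley Trick to $n$ point configurations, as discussed after its two-configuration statement: the map $M$ from subdivisions of the Cayley embedding $\Cayley(\Delta_{d-1},\dots,\Delta_{d-1})$ with $n$ factors to mixed subdivisions of $n{\cdot}\Delta_{d-1}$ is a refinement-preserving bijection sending regular subdivisions to regular ones, and $M$ is by construction the slicing operation that intersects each Cayley cell with the affine subspace cutting out the (scaled) Minkowski sum. So there is a unique regular subdivision $\Sigma$ of $\Cayley(\Delta_{d-1},\dots,\Delta_{d-1})$ with $M(\Sigma)=\downSigma{V}$. Transporting $\Sigma$ along the affine isomorphism \eqref{eq:product} gives a regular subdivision $\Sigma'$ of the product $\Delta_{d-1}\times\Delta_{n-1}$, and then $\downSigma{V}=M(\Sigma)$ literally becomes a slice of $\Sigma'$ --- a piecewise linear isomorphism up to the scaling factor we have agreed to suppress.

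The point that requires care, and the main obstacle, is to verify that $\Sigma'$ is induced by the lifting $(e_i,e_k)\mapsto v_{ik}$, that is, that the lifting on the Cayley polytope is obtained by simply gluing the $n$ liftings of the factors. This is the ``regular to regular'' clause of the Cayley Trick made explicit: a regular mixed subdivision of $n{\cdot}\Delta_{d-1}$ whose $k$-th component complex is induced by a height function $h_k$ on $\{e_1,\dots,e_d\}$ pulls back under $M^{-1}$ to the regular subdivision of $\Cayley(\Delta_{d-1},\dots,\Delta_{d-1})$ induced by the function sending $(e_i,u_k)$ to $h_k(e_i)$; compare Corollary~\ref{cor:downmixed} and \cite[\S9.2]{Triangulations}. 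Specializing to $h_k(e_i)=v_{ik}$ and using \eqref{eq:product} to identify $(e_i,u_k)$ with the vertex $(e_i,e_k)$ of $\Delta_{d-1}\times\Delta_{n-1}$ then gives the corollary.
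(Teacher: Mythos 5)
Your proposal is correct and follows the same route the paper intends: the paper states the corollary as an immediate consequence of Proposition~\ref{prop:downsigma} together with the identification \eqref{eq:product} of the $n$-fold Cayley embedding with $\Delta_{d-1}\times\Delta_{n-1}$, and your write-up simply makes explicit the ``regular-to-regular'' bookkeeping (the concatenated lifting $(e_i,u_k)\mapsto v_{ik}$) that the paper leaves to the reader via \cite[\S9.2]{Triangulations}.
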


Clearly, when we talk about subdivisions of products of simplices it makes sense to think about exchanging the factors.
A direct computation shows that this corresponds to changing from $V\in\RR^{d\times n}$ to the transpose $\transpose{V}\in\RR^{n\times d}$.
Figure~\ref{fig:dual} shows the max-tropical hyperplane arrangement in $\RR^4/\RR\1$ and the min-tropical convex hull arising from the $4{\times}3$-matrix $\transpose{V}$ for $V$ as in Example~\ref{exmp:arrangement}.
The mixed subdivision $\upSigma{-\transpose{V}}$ is displayed in Figure~\ref{fig:mixed-3d}.
\begin{figure}[tb]\centering
  \includegraphics[height=.4\textheight]{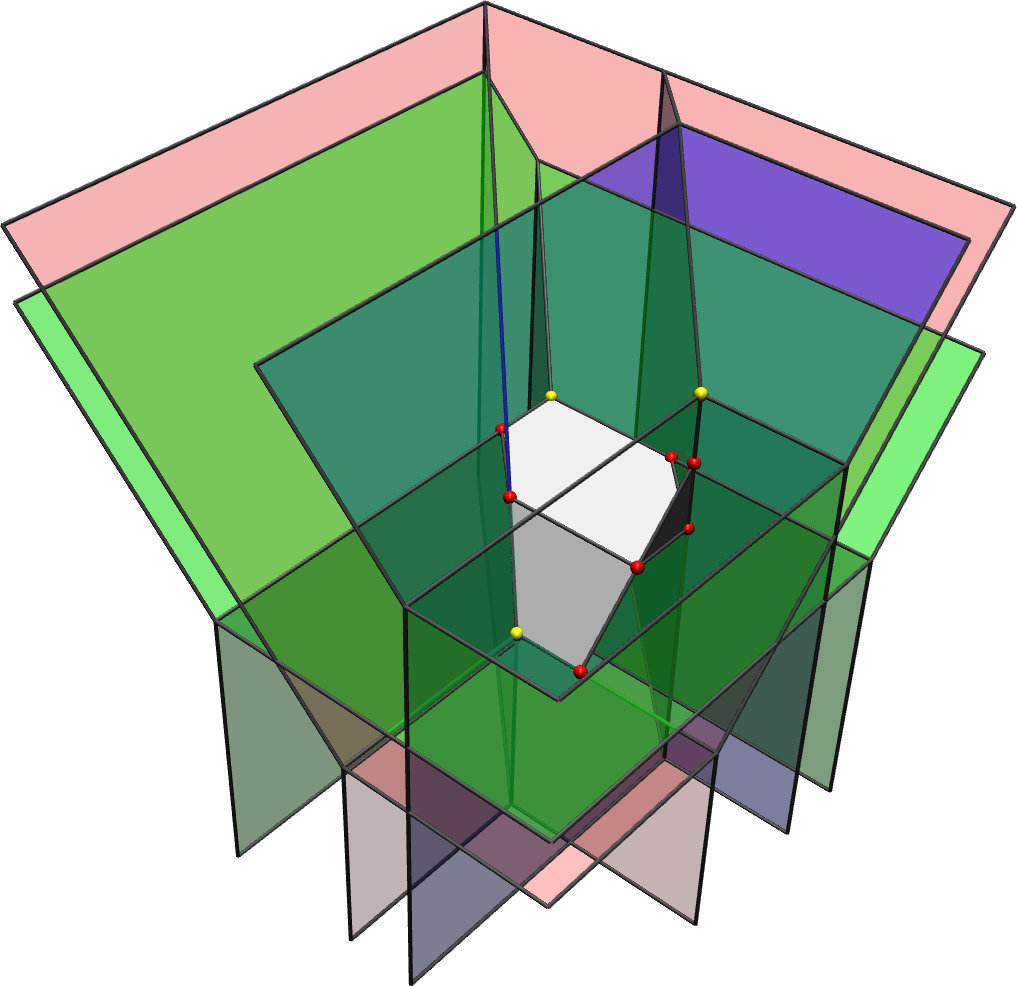}
  \caption{Three max-tropical hyperplanes in $\RR^4/\RR\1$. Compare with Figure~\ref{fig:polygon}}
  \label{fig:dual}
\end{figure}

If the matrix $V$ contains at least one coefficient $\infty$ then Corollary~\ref{cor:product} holds for the proper subpolytope
\[
\conv\SetOf{(e_i,e_k)\in\RR^d\times\RR^n}{v_{ik}\in\RR}
\]
of $\Delta_{d-1}\times\Delta_{n-1}$.
These subpolytopes and their subdivisions are studied in \cite{FinkRincon:2015} and~\cite{JoswigLoho:1503.04707}.
The \emph{tropical covector} of a point $z\in\RR^d$ with respect to the matrix $V$ is defined as
\[
\tc(z) \ := \ \SetOf{(i,k)\in[d]\times[n]\vphantom{\strut}}{v_{ik}-z_i=\min(v_{1k}-z_1,v_{2k}-z_2,\dots,v_{dk}-z_d)} \enspace .
\]
That is to say, the pair $(i,k)$ lies in $\tc(z)$ if and only if the minimum of the coordinates of the difference $v^{(k)}-z$ is attained at $i$.
This encodes the relative position of $z$ with respect to the columns of $V$.
It is immediate that $\tc(z)$ is constant on the set $z+\RR\1$.
Thus it is well-defined for points in the tropical projective torus.
In the language of \cite{DevelinSturmfels04} the tropical covectors occur as ``types'', and these are called ``fine types'' in \cite{DochtermannJoswigSanyal:2012}.
The term ``covector'' was first used in \cite{FinkRincon:2015} and subsequently in \cite{JoswigLoho:1503.04707}.

\begin{table}[H]
  \caption{Max-tropical polynomial which arises as the product of four tropical linear forms with separate variables $y_{ij}$ for $(i,j)\in[3]\times[4]$.
    Evaluating at $y_{1j}=0$, $y_{2j}=1$, $y_{3j}=3$, for $j=1,2,3,4$, yields the value $6$, which is the maximum taken over $81=3^4$ terms.
    The four terms for which that maximum is attained are marked.}
  \label{tab:poly}
  \[
  \begin{split}
    W(&y_{11},y_{12},\dots,y_{34}) \ = \ \max ( \\
    &y_{11}+y_{12}+y_{13}+y_{14} , \  y_{11}+y_{12}+y_{13}+y_{24} , \  -2+y_{11}+y_{12}+y_{13}+y_{34} , \  \\ &-3+y_{11}+y_{12}+y_{14}+y_{23} , \  -3+y_{11}+y_{12}+y_{14}+y_{33} , \  -3+y_{11}+y_{12}+y_{23}+y_{24} , \  \\ &  -5+y_{11}+y_{12}+y_{23}+y_{34} , \   -3+y_{11}+y_{12}+y_{24}+y_{33} , \  -5+y_{11}+y_{12}+y_{33}+y_{34} , \  \\ &  -4+y_{11}+y_{13}+y_{14}+y_{22} , \  -1+y_{11}+y_{13}+y_{14}+y_{32} , \    -4+y_{11}+y_{13}+y_{22}+y_{24} , \  \\ &  -6+y_{11}+y_{13}+y_{22}+y_{34} , \  -1+y_{11}+y_{13}+y_{24}+y_{32} , \  -3+y_{11}+y_{13}+y_{32}+y_{34} , \  \\ &    -7+y_{11}+y_{14}+y_{22}+y_{23} , \  -7+y_{11}+y_{14}+y_{22}+y_{33} , \  -4+y_{11}+y_{14}+y_{23}+y_{32} , \  \\ &  -4+y_{11}+y_{14}+y_{32}+y_{33} , \    -7+y_{11}+y_{22}+y_{23}+y_{24} , \  -9+y_{11}+y_{22}+y_{23}+y_{34} , \  \\ &  -7+y_{11}+y_{22}+y_{24}+y_{33} , \  -9+y_{11}+y_{22}+y_{33}+y_{34} , \    -4+y_{11}+y_{23}+y_{24}+y_{32} , \  \\ &  -6+y_{11}+y_{23}+y_{32}+y_{34} , \  -4+y_{11}+y_{24}+y_{32}+y_{33} , \  -6+y_{11}+y_{32}+y_{33}+y_{34} , \  \\ &   -1+y_{12}+y_{13}+y_{14}+y_{21} , \  y_{12}+y_{13}+y_{14}+y_{31} , \  -1+y_{12}+y_{13}+y_{21}+y_{24} , \  \\ &  -3+y_{12}+y_{13}+y_{21}+y_{34} , \    y_{12}+y_{13}+y_{24}+y_{31} , \  -2+y_{12}+y_{13}+y_{31}+y_{34} , \  \\ &  -4+y_{12}+y_{14}+y_{21}+y_{23} , \  -4+y_{12}+y_{14}+y_{21}+y_{33} , \    -3+y_{12}+y_{14}+y_{23}+y_{31} , \  \\ &  -3+y_{12}+y_{14}+y_{31}+y_{33} , \  -4+y_{12}+y_{21}+y_{23}+y_{24} , \  -6+y_{12}+y_{21}+y_{23}+y_{34} , \  \\ &    -4+y_{12}+y_{21}+y_{24}+y_{33} , \  -6+y_{12}+y_{21}+y_{33}+y_{34} , \  -3+y_{12}+y_{23}+y_{24}+y_{31} , \  \\ &  -5+y_{12}+y_{23}+y_{31}+y_{34} , \    -3+y_{12}+y_{24}+y_{31}+y_{33} , \  -5+y_{12}+y_{31}+y_{33}+y_{34} , \  \\ &  -5+y_{13}+y_{14}+y_{21}+y_{22} , \  -2+y_{13}+y_{14}+y_{21}+y_{32} , \    -4+y_{13}+y_{14}+y_{22}+y_{31} , \  \\ &  -1+y_{13}+y_{14}+y_{31}+y_{32} , \  -5+y_{13}+y_{21}+y_{22}+y_{24} , \  -7+y_{13}+y_{21}+y_{22}+y_{34} , \  \\ &    -2+y_{13}+y_{21}+y_{24}+y_{32} , \  -4+y_{13}+y_{21}+y_{32}+y_{34} , \  -4+y_{13}+y_{22}+y_{24}+y_{31} , \  \\ &  -6+y_{13}+y_{22}+y_{31}+y_{34} , \    \underline{-1+y_{13}+y_{24}+y_{31}+y_{32}} , \  \underline{-3+y_{13}+y_{31}+y_{32}+y_{34}} , \  \\ &  -8+y_{14}+y_{21}+y_{22}+y_{23} , \  -8+y_{14}+y_{21}+y_{22}+y_{33} , \    -5+y_{14}+y_{21}+y_{23}+y_{32} , \  \\ &  -5+y_{14}+y_{21}+y_{32}+y_{33} , \  -7+y_{14}+y_{22}+y_{23}+y_{31} , \  -7+y_{14}+y_{22}+y_{31}+y_{33} , \  \\ &    -4+y_{14}+y_{23}+y_{31}+y_{32} , \  -4+y_{14}+y_{31}+y_{32}+y_{33} , \  -8+y_{21}+y_{22}+y_{23}+y_{24} , \  \\ &  -10+y_{21}+y_{22}+y_{23}+y_{34} , \    -8+y_{21}+y_{22}+y_{24}+y_{33} , \  -10+y_{21}+y_{22}+y_{33}+y_{34} , \  \\ &  -5+y_{21}+y_{23}+y_{24}+y_{32} , \  -7+y_{21}+y_{23}+y_{32}+y_{34} , \    -5+y_{21}+y_{24}+y_{32}+y_{33} , \  \\ &  -7+y_{21}+y_{32}+y_{33}+y_{34} , \  -7+y_{22}+y_{23}+y_{24}+y_{31} , \  -9+y_{22}+y_{23}+y_{31}+y_{34} , \  \\ &    -7+y_{22}+y_{24}+y_{31}+y_{33} , \  -9+y_{22}+y_{31}+y_{33}+y_{34} , \  -4+y_{23}+y_{24}+y_{31}+y_{32} , \  \\ &  -6+y_{23}+y_{31}+y_{32}+y_{34} , \    \underline{-4+y_{24}+y_{31}+y_{32}+y_{33}} , \  \underline{-6+y_{31}+y_{32}+y_{33}+y_{34}} ) \enspace .
  \end{split}
  \]
\end{table}

\begin{example}
  Again we consider $V\in\RR^{3\times 4}$ as in Example~\ref{exmp:arrangement}.
  For instance, for $z=\transpose{(0,1,3)}$ we have
  \begin{equation}\label{eq:exmp_tc}
    \tc\bigl(z) \ = \ \bigl\{(3,1),(3,2),(1,3),(3,3),(2,4),(3,4)\bigr\} \enspace .
  \end{equation}
  It is instrumental to locate the point $z$ in Figure~\ref{fig:arrangement}:
  It is the unique point in the intersection of the green hyperplane (column 4) with the blue hyperplane (column 3).
  In the mixed subdivision picture in Figure~\ref{fig:mixed} the point $z$ corresponds to the maximal cell with vertices $(0,0,4)$, $(0,1,3)$, $(1,0,3)$ and $(1,1,2)$.
\end{example}

\begin{remark}\label{rem:separate}
  Consider the four max-tropical linear forms $(-v^{(k)})$ corresponding to the columns of our running example matrix $V$ for \emph{separate variables}.
  That is, we choose a new set of variables for each column.
  More precisely, we consider
  \[
  \begin{array}{ll}
    (-v^{(1)}) = \max\{y_{11},y_{21}-1,y_{31}\} & (-v^{(2)}) = \max\{y_{12},y_{22}-4,y_{32}-1\}\\
    (-v^{(3)}) = \max\{y_{13},y_{23}-3,y_{33}-3\} &  (-v^{(4)}) = \max\{y_{14},y_{24},y_{34}-2\}
  \end{array} \enspace .
  \]
  Now we can look at the max-tropical polynomial $W$ in the $12$ variables $y_{11},y_{12},\dots,y_{34}$ which arises as the tropical product of these four tropical linear forms; this is shown in Table~\ref{tab:poly}.
  The tropical covector $\tc(z)$ of a point $z\in\RR^3$ agrees with the least common multiple of those monomials of $W$ at which the maximum $W(z,z,z,z)$ is attained; here we substitute $y_{11}=y_{12}=y_{13}=y_{14}=z_1$, $y_{21}=y_{22}=y_{23}=y_{24}=z_2$ etc.\ by real numbers.
  For instance, letting $z=\transpose{(0,1,3)}$ the maximum $W(z,z,z,z)=6$ is attained at the four terms underlined in Table~\ref{tab:poly}.
  Observe that the four marked terms in Table~\ref{tab:poly} are precisely those which correspond to subsets of the tropical covector shown in (\ref{eq:exmp_tc}).
  If we substitute $y_{11}=y_{12}=y_{13}=y_{14}=x_1$, $y_{21}=y_{22}=y_{23}=y_{24}=x_2$ etc.\ by indeterminates $x=(x_1,x_2,x_3)$ the resulting expression $W(x,x,x,x)$ is precisely the tropical polynomial in \eqref{eq:arrangement}.
  This latter substitution explains the relationship between the ``fine types'' and the ``coarse types'' discussed in \cite{DochtermannJoswigSanyal:2012} or, equivalently, the relationship between the tropical covectors and the coordinates in the mixed subdivision picture.
\end{remark}

\begin{figure}[tb]\centering
  \includegraphics[height=.4\textheight]{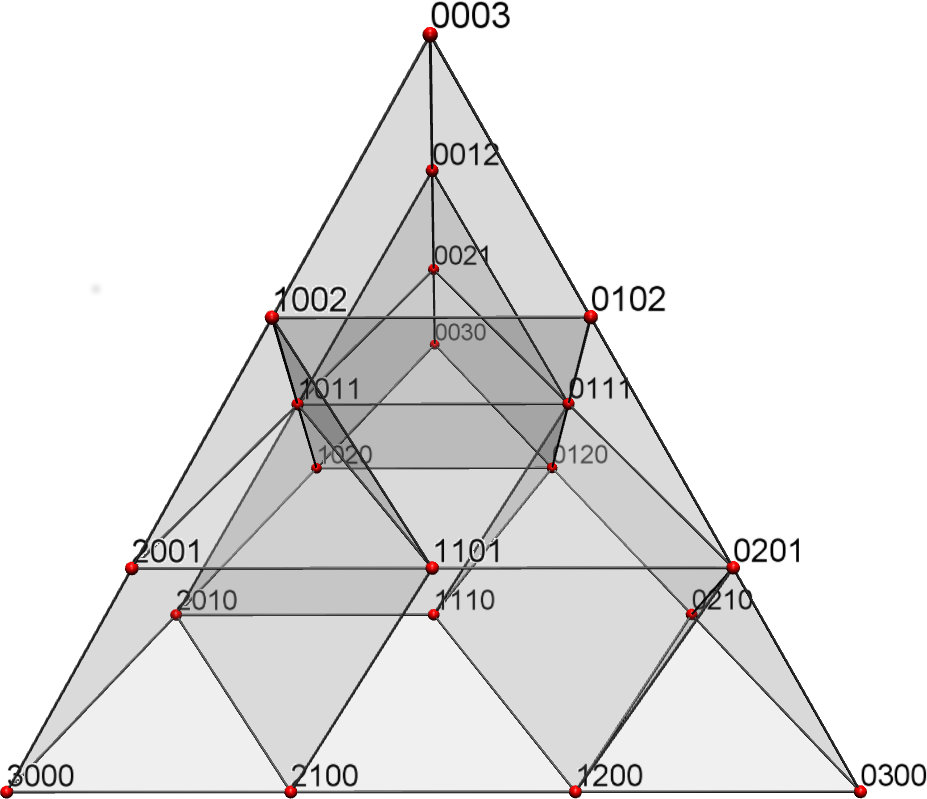}
  \caption{Mixed subdivision of $3{\cdot}\Delta_3$ dual to max-tropical hyperplane arrangement from Figure~\ref{fig:dual}}
  \label{fig:mixed-3d}
\end{figure}

\section{Ricardian Theory of International Trade}
\noindent
There is a recent interest to apply techniques from tropical geometry to questions studied in economics.
Here we focus on Shiozawa's work on international trade theory, and we summarize one part of the paper~\cite{Shiozawa:2015}.
Shiozawa suggests to describe the Ricardian theory of international trade in terms of tropical hyperplane arrangements and tropical convexity.
Since the underpinnings of that theory rely on the Cayley Trick in an essential way, it is obvious that it can be exploited.

A \emph{Ricardian economy} is described by a pair $(R,q)$ where $R=(r_{ik})$ is a $d{\times} n$-matrix of positive real numbers and $q$ is a vector of $d$ positive reals.
The $d$ rows of $R$ represent commodities, and its $n$ columns are countries.
The \emph{production coefficient} $r_{ik}$ measures how much labor is required to produce one unit of commodity $i$ in country $k$, and the number $q_k$ is the total available work force of the country $k$.
These parameters are fixed.
The purpose of this highly abstract economic model is to study the interaction of prices for the commodities and wages for the labor.
In fact, here we will focus on just a single aspect of Ricardian trade theory, which is why subsequently we will even ignore the work force vector~$q$.

A \emph{wage--price system} in this economy is a pair $(w,p)$, where $w$ is a column vector of length $n$ and $p$ is a column vector of length $d$.
Again all entries are positive.
The number $w_k$ is the wage in country $k$, and $p_i$ is the international price for commodity $i$.
Now a wage--price system $(w,p)$ is \emph{admissible} if
\begin{equation}\label{eq:ricardian:admissible} % indices i,j=commodities k,\ell=countries
  r_{ik} w_k  \ \geq \ p_i \quad \text{for all } i\in[d] \text{ and } k\in[n] \enspace .
\end{equation}
These inequalities reflect the fundamental assumption that the countries compete freely among one another on the world market.
This is supposed to say that the prices are low enough to avoid excess profit.
Notice that the Ricardian economic model neglects any transport costs.

In the Equation \eqref{eq:ricardian:admissible}, for every fixed commodity $i$, we can form the minimum over all countries to obtain a total of $m$ consolidated inequalities, one for each commodity.
If we now assume that the prices are as large as possible without violating the admissibility constraints, we arrive at the equations
\begin{equation}\label{eq:ricardian:min_equation}
  \min\bigl\{r_{i1} w_1,\, r_{i2} w_2,\, \dots,\, r_{in} w_n \bigr\} \ = \ p_i \quad \text{for all } i\in[d] \enspace .
\end{equation}
Going from the inequalities \eqref{eq:ricardian:admissible} to the equations \eqref{eq:ricardian:min_equation} imposes an extra condition.
The wages are said to be \emph{sharing} for the given prices if that condition is satisfied.

It is of interest for which countries the minimum on the left of \eqref{eq:ricardian:min_equation} is attained.
The pair $(i,k)\in[d]\times[n]$ is called \emph{competitive} for the admissible wage--price system $(w,p)$ if
\[
r_{ik} w_k \ = \ p_i \ \leq \ r_{i\ell} w_\ell  \quad \text{for all } \ell\in[n] \enspace .
\]
This means that $k$ belongs to those countries that are efficient enough to produce commodity~$i$ at the international price~$p_i$.
The condition that the prices are sharing means that for each commodity $k$ there is at least one country $i$ such that $(i,k)$ is competitive for $(w,p)$.
% \begin{remark}\label{rem:covectors}
%   In the language of \cite{DevelinSturmfels04} the competitive pairs of indices correspond to the ``type'' of the point $\bar p$ with respect to the columns of the matrix $\bar R$.
%   These are exactly the ``covectors'' in \cite{FinkRincon:2015} and \cite{JoswigLoho:1503.04707}.
% \end{remark}

If we now rewrite $\bar{a}_{ij}=\log r_{ij}$ as well as $\bar w_k=\log w_k$ and $\bar p_i=\log p_i$ then \eqref{eq:ricardian:min_equation} becomes a system of tropical linear equations which read
\begin{equation}\label{eq:ricardian:log_equation}
  \begin{split}
    \bar p_i \ &= \ \min\bigl\{\bar r_{i1}+\bar w_1 ,\, \bar r_{i2}+\bar w_2 ,\, \dots,\, \bar r_{in}+\bar w_n\bigr\} \\
    &= \  (\bar r_{i1}\odot\bar w_1) \oplus (\bar r_{i2}\odot\bar w_2 ) \oplus \dots \oplus (\bar r_{in}\odot\bar w_n) \quad \text{for all } i\in[d] \enspace .
  \end{split}
\end{equation}
That is, $\bar p$ is sharing if and only if $\bar p$ is contained in the tropical cone $\mintpos(\bar R)$.
Notice that in this translation we make use of the fact that the logarithm function is monotone.
Letting $\bar R=(\bar r_{ij})_{i,j}\in\RR^{d\times n}$ and similarly $\bar w=\transpose{(\bar w_1,\dots,\bar w_n)}$ as well as $\bar p=\transpose{(\bar p_1,\dots, \bar p_d)}$ we obtain \eqref{eq:ricardian:log_equation} in matrix form
\begin{equation}\label{eq:ricardian:sharing}
  \bar R \odot \bar w \ = \ \bar p \enspace .
\end{equation}

\begin{example}\label{exmp:ricardian:sharing}
  Let us consider as $\bar R$ the matrix $V$ from Example~\ref{exmp:arrangement}, i.e., $d=3$ and $n=4$.
  This way, e.g., the coefficient $v_{24}=\bar r_{24}=0$ is interpreted as the logarithmic cost to produce one unit of commodity $2$ in country $4$.
  For instance, the logarithmic wage--price system
  \begin{equation}\label{eq:ricardian:wp}
    \bar w \ = \ \transpose{(5,5,1,2)} \quad \text{and} \quad \bar p \ = \ \transpose{(1,2,4)}
  \end{equation}
  satisfies the equation \eqref{eq:ricardian:sharing}.
  Notice that $\transpose{(1,2,4)}$ and, e.g., $\transpose{(0,1,3)}$ are the same modulo $\RR\1$.
  That is, multiplying the prices and the wages by a global constant does not change the equation \eqref{eq:ricardian:sharing}.
  For this particular wage-price system the pairs
  \[
  (1,3)\,, \quad (2,4)\, , \quad (3,3) \quad \text{and} \quad (3,4)
  \]
  are competitive.
  That is, the commodity $1$ can only be produced sufficiently efficient in country $3$, while commodity $2$ is best produced in country $4$.
  The third commodity can be produced efficiently in countries $3$ and $4$.
  For these wages and prices countries $1$ and $2$ cannot compete at all.
  The logarithmic wage vector $\bar w=\transpose{(5,5,1,2)}$ is sharing for the logarithmic price vector $\bar p=\transpose{(1,2,4)}$:
  For each commodity there is at least one country that can produce sufficiently efficient to meet the prices on the world market.
  Notice that the competitive pairs form a subset of the tropical covector of the point $\transpose{(0,1,3)}$ given in~\eqref{eq:exmp_tc}.
  In fact, they correspond to the covector of the point $\bar w\in\RR^4/\RR\1$ with respect to $\transpose{V}$.
  Conceptually, this information allows to locate $\bar w$ in Figure~\ref{fig:dual}.
  Practically, however, it is a bit tedious to accomplish in a flat picture.
\end{example}

In the Ricardian economy there is a built-in symmetry between prices and wages, and this is what we want to elaborate now.
We can rewrite the admissibility condition~\eqref{eq:ricardian:admissible} as
\[
w_k \ \geq \ r_{ik}^{-1} p_i \quad \text{for all } i\in[d] \text{ and } k\in[n] \enspace .
\]
This works as we assumed that the production coefficients $r_{ik}$ are strictly positive.
We can define the matrix $R^\#=(r_{ik}^{-1})_{ki}\in\RR^{n\times d}$ and its logarithm
\[
{\bar R}^\# \ = \ (\log r_{ik}^{-1})_{ki} \ = \ (-\log r_{ik})_{ki} \ = \ -\transpose{\bar R} \enspace .
\]
Notice that, in contrast to the definition of $\bar R$, for the construction of ${\bar R}^\#$ we are taking \emph{negative} logarithms of the coefficients, and this corresponds to changing to $\max$ as the tropical addition.
Also observe that $\bar R^{\#\#}=\bar R$.
In this way the admissibility condition, in its logarithmic form, is equivalent to
\[
\bar w \ \geq \ {\bar R}^\# \maxodot \bar p \enspace .
\]
As before we impose an extra condition, namely equality in the above:
\begin{equation}\label{eq:ricardian:covering}
  \bar w \ = \ {\bar R}^\# \maxodot \bar p \enspace .
\end{equation}
In that case the prices are called \emph{covering} for the given wages.
This is dual to the sharing condition for the wages.
That is, in this case, each country can produce at least one commodity efficiently enough to be able to afford maximum wages.
In this way a wage--price system $(w,p)$ that is both sharing and covering yields the pair of equalities
\begin{align}
  \bar w \ &= \ {\bar R}^\# \maxodot (\bar R \minodot \bar w) \label{eq:w} \\
  \bar p \ &= \ \bar R \minodot ({\bar R}^\# \maxodot \bar p) \label{eq:p} \enspace .
\end{align}

% max hyperplanes: matrix=R, apices=rows=commodities(d), coordinates=columns=countries(n), min-conv-hull of columns of R
% min hyperplanes: matrix=R^#, apices=rows=countries(n), coordinates=columns=commodities(d), max-conv-hull of columns of R^# 

Let us define the \emph{Shapley operator} of the Ricardian economy as the map $T:\RR^d\to\RR^d$ that sends a logarithmic price vector $\bar p$ to $\bar R \minodot ({\bar R}^\# \maxodot \bar p)$.
Then \eqref{eq:p} says that $\bar p$ is a fixed point of the Shapley operator $T$.
The name ``Shapley operator'' is borrowed from the theory of stochastic games; see \cite[\S2.2]{YoungZamir:2014}.
Below we will characterize the fixed points of the Shapley operator.

For two vectors $x,y\in\RR^d$ we can define
\[
\delta(x,y) \ := \ \max_{i,j} |x_i + y_j - x_j - y_i| \enspace ,
\]
and this yields a metric on the tropical projective torus $\RR^d/\RR\1$.
This is sometimes called \emph{Hilbert's projective metric}.
For an arbitrary tropical polytope $P\subset\RR^d/\RR\1$ and an arbitrary vector $v\in\RR^d$ among all vectors $w\in\RR^d$ with $w\geq v$ and $w+\RR\1\in P$ there is unique coordinatewise minimal vector $w'$; see \cite[Proposition~7]{JoswigSturmfelsYu:2007}.
The point $w'+\RR\1$ is the \emph{nearest point} to $v$ in $P$.
The value $\delta(v,w')$ minimizes the distance between $v$ and all points in~$P$.
However, in general, that minimum may be attained for other points, too; see \cite[Example~9]{JoswigSturmfelsYu:2007}.

\begin{theorem}\label{thm:shapley}
  The Shapley operator $T:\RR^d\to\RR^d$ maps a vector $\bar p$ to its nearest point in the tropical polytope $\mintconv(\bar R)$.
  In particular, the points in $\mintconv(\bar R)$ are precisely the fixed points of $T$.
\end{theorem}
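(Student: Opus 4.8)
The plan is to rewrite the Shapley operator in explicit min--max coordinates and then recognise it, by a standard residuation argument, as the coordinatewise-smallest point of $\mintpos(\bar R)$ that dominates the input.

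First I would unwind the definition. Put $\mu := \bar R^\# \maxodot \bar p = (-\transpose{\bar R}) \maxodot \bar p \in \RR^n$, so that $\mu_k = \max_{i\in[d]}(\bar p_i - \bar r_{ik})$ for each $k \in [n]$, and then $T(\bar p) = \bar R \minodot \mu$, i.e. $T(\bar p)_i = \min_{k\in[n]}(\bar r_{ik} + \mu_k)$. Since all entries of $\bar R$ are finite, $T$ is a genuine self-map of $\RR^d$, and it commutes with adding multiples of $\1$, so it descends to $\RR^d/\RR\1$. Substituting the index $i$ into the maximum defining $\mu_k$ gives $T(\bar p)_i \ge \bar r_{ik} + (\bar p_i - \bar r_{ik}) = \bar p_i$ for every $k$, hence $T(\bar p) \ge \bar p$ coordinatewise; and by construction $T(\bar p) = \bar R \minodot \mu \in \mintpos(\bar R)$, so $T(\bar p) + \RR\1 \in \mintconv(\bar R)$. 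In particular a vector $w \ge \bar p$ with $w + \RR\1 \in \mintconv(\bar R)$ does exist, so the nearest point of $\bar p$ in $\mintconv(\bar R)$ in the sense of \cite[Proposition~7]{JoswigSturmfelsYu:2007} is well defined.

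The core of the argument is the residuation identity relating $\bar R \minodot (-)$ and $(-\transpose{\bar R}) \maxodot (-)$. For an arbitrary $\nu \in \RR^n$ one has $\bar R \minodot \nu \ge \bar p$ if and only if $\bar r_{ik} + \nu_k \ge \bar p_i$ for all $i \in [d]$, $k \in [n]$, equivalently $\nu_k \ge \max_{i}(\bar p_i - \bar r_{ik}) = \mu_k$ for all $k$; that is, $\bar R \minodot \nu \ge \bar p \iff \nu \ge \mu$. Now let $w$ be any vector with $w + \RR\1 \in \mintconv(\bar R)$ and $w \ge \bar p$. Using $\mintpos(\bar R) = \mintpos(\bar R) + \RR\1$ we may write $w = \bar R \minodot \nu$ for some $\nu \in \RR^n$; from $w \ge \bar p$ and the equivalence just proved, $\nu \ge \mu$, and since $\bar R \minodot (-)$ is order preserving, $w = \bar R \minodot \nu \ge \bar R \minodot \mu = T(\bar p)$. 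Together with the first step this says precisely that $T(\bar p)$ is the coordinatewise-minimal vector $w$ with $w \ge \bar p$ and $w + \RR\1 \in \mintconv(\bar R)$, i.e. $T(\bar p)$ is the nearest point of $\bar p$ in $\mintconv(\bar R)$.

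The fixed-point statement then drops out: if $\bar p + \RR\1 \in \mintconv(\bar R)$, i.e. $\bar p \in \mintpos(\bar R)$, then $\bar p$ is itself the coordinatewise-minimal $w \ge \bar p$ in $\mintpos(\bar R)$, so $T(\bar p) = \bar p$; conversely $T(\bar p) = \bar p$ forces $\bar p = T(\bar p) \in \mintpos(\bar R)$. Thus the fixed points of $T$ are exactly the elements of $\mintpos(\bar R)$, which modulo $\RR\1$ are exactly the points of $\mintconv(\bar R)$. I do not anticipate a substantive obstacle; the only care required is in bookkeeping the $\min$/$\max$ and the sign hidden in $\bar R^\# = -\transpose{\bar R}$, and in checking that the coordinatewise-minimal dominating vector description of the nearest point from \cite{JoswigSturmfelsYu:2007} matches the residuation equivalence $\bar R \minodot \nu \ge \bar p \iff \nu \ge \mu$.
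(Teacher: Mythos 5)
Your proof is correct. It differs from the paper's in one essential respect: where the paper, after computing $T(\bar p)_i=\min_{k}\max_{j}\{\bar r_{ik}-\bar r_{jk}+\bar p_j\}$, simply observes that this coincides with the nearest-point formula of \cite[Lemma~8]{JoswigSturmfelsYu:2007} and cites that lemma, you reprove that fact from scratch via the residuation equivalence $\bar R\minodot\nu\geq\bar p\iff\nu\geq\mu$ with $\mu=\bar R^\#\maxodot\bar p$. Your three steps --- $T(\bar p)\geq\bar p$, $T(\bar p)\in\mintpos(\bar R)$, and minimality of $T(\bar p)$ among all dominating points of $\mintpos(\bar R)$ --- exactly match the characterization of the nearest point as the coordinatewise-minimal dominating vector that the paper takes from \cite[Proposition~7]{JoswigSturmfelsYu:2007}, so the identification is legitimate. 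What your route buys is self-containedness and a conceptual explanation of \emph{why} the Shapley operator is a nearest-point (hence idempotent) map: $\bar R^\#\maxodot(-)$ is the residual (upper adjoint) of $\bar R\minodot(-)$, so their composite is a closure operator onto the image $\mintpos(\bar R)$; the paper's route is shorter but pushes all of this into the cited lemma. Two cosmetic points: when you write $w=\bar R\minodot\nu$ you should allow $\nu\in\TT^n$ rather than $\nu\in\RR^n$ (some coordinates of $\nu$ may be $\infty$; the residuation inequality and monotonicity go through unchanged), and the fixed-point direction ``$\bar p\in\mintpos(\bar R)\Rightarrow T(\bar p)=\bar p$'' is exactly the paper's closing remark that each point of $P$ is its own nearest point.
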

\begin{proof}
  The $i$-th coefficient of the vector $T(\bar p)$ is the real number
  \[
  \begin{split}
    (\bar r_{i1},\dots,\bar r_{in}) \minodot  \transpose{\bigl(\max_{j\in[d]}\{-\bar r_{j1}+\bar p_j\},\dots,\max_{j\in[d]}\{-\bar r_{jn}+\bar p_j\}\bigr)} \\
    = \ \min_{k\in[n]} \ \max_{j\in[d]} \, \{ \bar r_{ik} - \bar r_{jk}+\bar p_j \} \enspace .
  \end{split}
  \]
  This agrees with the formula in \cite[Lemma~8]{JoswigSturmfelsYu:2007}, from which we infer that the Shapley operator sends $\bar p$ to the nearest point in the min-tropical convex hull $P:=\tconv(\bar R)$ of the columns of the matrix $\bar R$.
  For each point $x\in P$ its nearest point in $P$ is $x$ itself.
\end{proof}
As an immediate consequence the Shapley operator $T$ is idempotent, i.e., for all logarithmic price vectors $\bar p$ we have
\[
T(\bar p) \ = \ T\bigl(T(\bar p)\bigr) \enspace .
\]

In the above we first analyzed the prices and then deduced the wages.
However, this reasoning can be reversed.
The \emph{dual Shapley operator} is the map $T^\#:\RR^n\to\RR^n$ that maps a logarithmic wage vector $\bar w$ to ${\bar R}^\# \maxodot (\bar R \minodot \bar w)$.
The wages in \eqref{eq:w} can be analyzed directly by studying $T^\#$ instead of $T$, as in Theorem~\ref{thm:shapley}.

\begin{example}
  We continue the Example~\ref{exmp:ricardian:sharing}.
  Again we look at the logarithmic wage--price system $(\bar w,\bar p)$ from \eqref{eq:ricardian:wp} for the (logarithmic) production coefficients given by the $3{\times}4$-matrix $V$ from Example~\ref{exmp:arrangement}.
  We saw that the wages are sharing, but the prices are not covering since the countries $1$ and $2$ cannot successfully compete on the world market.
  Applying the dual Shapley operator $T^\#$ to $\bar w=\transpose{(5,5,1,2)}$ gives the new logarithmic wage vector
  \[
  \bar w' \ = \ \transpose{(4,3,1,2)} \enspace ,
  \]
  which now yields
  \[
  V \minodot \bar w' \ = \ \transpose{(1,2,4)} \ = \ \bar p
  \]
  from \eqref{eq:ricardian:sharing}.
  That is, lowering the logarithmic wages from $\bar w$ to $\bar w'$ gives the logarithmic wage-price system $(\bar w',\bar p)$ that is both sharing and covering.
  Notice that this only affects the wages in the countries $1$ and $2$ which could not compete previously.
  The competitive pairs are given by the covector of the point $\transpose{(1,2,4)}$, which agrees with $\transpose{(0,1,3)}$ modulo $\RR\1$, in \eqref{eq:exmp_tc}.
\end{example}

\begin{corollary}
  The wage--price systems that are both sharing and covering bijectively correspond to pairs of points in $\mintconv(\bar R)$ and $\mintconv(\transpose{\bar R})=-\maxtconv(\bar R^\#)$ that are linked via \eqref{eq:ricardian:sharing} and \eqref{eq:ricardian:covering}. 
\end{corollary}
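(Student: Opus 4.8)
The plan is to deduce the statement from Theorem~\ref{thm:shapley} and its analogue for the dual Shapley operator $T^{\#}$, using the monotone logarithm as the dictionary between the economic and the tropical pictures. A wage--price system $(w,p)$ with positive entries corresponds bijectively to the pair $(\bar p,\bar w)=(\log p,\log w)\in\RR^{d}\times\RR^{n}$, and under this identification ``$(w,p)$ is sharing'' is exactly the equation \eqref{eq:ricardian:sharing} and ``$(w,p)$ is covering'' is exactly \eqref{eq:ricardian:covering}. Thus a sharing and covering system is literally the same datum as a pair $(\bar p,\bar w)$ satisfying both \eqref{eq:ricardian:sharing} and \eqref{eq:ricardian:covering}, i.e.\ a \emph{linked pair}; so the real work is only to locate the two coordinates of such a pair in the two tropical polytopes, to show every point of either polytope extends uniquely to a linked pair, and to see that these assignments are mutually inverse.

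For the location, substituting $\bar p=\bar R\minodot\bar w$ into the covering equation and, symmetrically, the covering equation into the sharing equation produces exactly \eqref{eq:w} and \eqref{eq:p}; that is, $\bar p$ is a fixed point of $T$ and $\bar w$ a fixed point of $T^{\#}$. By Theorem~\ref{thm:shapley} the fixed points of $T$ are precisely the points of $\mintconv(\bar R)$, so $\bar p\in\mintconv(\bar R)$. Rerunning the proof of Theorem~\ref{thm:shapley} with $\bar R^{\#}$ in place of $\bar R$ and the roles of $\min$ and $\max$ interchanged --- the point of the remark on $T^{\#}$ after that theorem --- shows that $T^{\#}$ is the max-tropical nearest-point map onto $\maxtconv(\bar R^{\#})$ and that its fixed points are exactly the points of $\maxtconv(\bar R^{\#})$; since $\bar R^{\#}=-\transpose{\bar R}$ we have $\maxtconv(\bar R^{\#})=-\mintconv(\transpose{\bar R})$, so $\bar w$ lands in the reflected copy of $\mintconv(\transpose{\bar R})$ recorded in the statement.

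For the bijectivity, given $\bar p\in\mintconv(\bar R)$ set $\bar w:=\bar R^{\#}\maxodot\bar p$. Then \eqref{eq:ricardian:covering} holds by construction, and \eqref{eq:ricardian:sharing} holds because $\bar R\minodot\bar w=\bar R\minodot(\bar R^{\#}\maxodot\bar p)=T(\bar p)=\bar p$, where we used that $\bar p$ is a fixed point of $T$. Conversely \eqref{eq:ricardian:covering} forces the wage partner of a given $\bar p$ to equal $\bar R^{\#}\maxodot\bar p$, hence it is unique, and symmetrically \eqref{eq:ricardian:sharing} determines $\bar p$ from $\bar w$. So $(\bar p,\bar w)\mapsto\bar p$ is a bijection from linked pairs onto $\mintconv(\bar R)$ and $(\bar p,\bar w)\mapsto\bar w$ one onto $\maxtconv(\bar R^{\#})=-\mintconv(\transpose{\bar R})$; composing with the logarithm bijection of the first paragraph gives the claimed correspondence.

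The step needing genuine care is the dual of Theorem~\ref{thm:shapley} for $T^{\#}$: one has to check that the reflection hidden in $\bar R^{\#}=-\transpose{\bar R}$ combines correctly with the $\min$/$\max$ interchange in that proof, so that the fixed-point set emerges as $\maxtconv(\bar R^{\#})$ and not something off by a sign. The rest is bookkeeping: a linked pair automatically lies in the two polytopes because $\bar p$ is then a $\min$-tropical combination of the columns of $\bar R$ and $\bar w$ a $\max$-tropical combination of the columns of $\bar R^{\#}$, and the standing positivity of $R$ makes every coefficient finite, so no projective-boundary phenomena occur and the only leftover ambiguity, a global rescaling of $(w,p)$, is exactly the $\RR\1$-translation already divided out in the tropical projective tori.
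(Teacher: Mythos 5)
Your proposal is correct and takes essentially the route the paper intends: log-identification of $(w,p)$ with $(\bar p,\bar w)$, substitution to obtain \eqref{eq:w} and \eqref{eq:p}, Theorem~\ref{thm:shapley} to locate $\bar p$ in $\mintconv(\bar R)$, the dual argument via $T^{\#}$ for $\bar w$, and the observation that the linking equations determine each partner from the other. The paper states the corollary without a proof block but explicitly signals this very argument in the remark preceding it about analyzing \eqref{eq:w} directly through $T^{\#}$; your write-up (including the sign check $\maxtconv(\bar R^{\#})=-\mintconv(\transpose{\bar R})$, which is the one place where care is genuinely needed) just supplies the details the paper leaves implicit.
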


In the language of \cite{Shiozawa:2015} the tropical polytope $\mintconv(\bar R)$ is the ``spanning core in the price-simplex'' , whereas $\maxtconv(\bar R^\#)$ is the ``spanning core in the wage-simplex''.
In that paper the mixed subdivisions of dilated simplices occur as the ``McKenzie--Minabe diagrams''; see \cite[\S9]{Shiozawa:2015}.
Points are interpreted as ``production scale vectors''.
These describe which percentage of the total work force of a country produces which commodities.
This allows, e.g, to read off the total world production.

The Cayley Trick allows for four ways to describe and to visualize the same data.
For our running example $3{\times}4$-matrix we have the initial arrangement of four tropical hyperplanes in $\RR^3/\RR\1$ in Figure~\ref{fig:arrangement} and its transpose of three tropical hyperplanes in $\RR^4/\RR\1$ in Figure~\ref{fig:dual}.
The first arrangement is dual to the mixed subdivision of $4{\cdot}\Delta_2$ in Figure~\ref{fig:mixed}, while the second is dual to the mixed subdivision of $3{\cdot}\Delta_3$ in Figure~\ref{fig:mixed-3d}.

\bibliographystyle{amsplain}
\bibliography{main}

\providecommand{\bysame}{\leavevmode\hbox to3em{\hrulefill}\thinspace}
\providecommand{\MR}{\relax\ifhmode\unskip\space\fi MR }
% \MRhref is called by the amsart/book/proc definition of \MR.
\providecommand{\MRhref}[2]{%
  \href{http://www.ams.org/mathscinet-getitem?mr=#1}{#2}
}
\providecommand{\href}[2]{#2}
\begin{thebibliography}{10}

\bibitem{BaldwinKlemperer}
Elizabeth Baldwin and Paul Klemperer, \emph{Understanding preferences: ``demand
  types'', and the existence of equilibrium with indivisibilities}, 2015,
  Preprint \url{http://www.nuff.ox.ac.uk/users/klemperer/demandtypes.pdf}.

\bibitem{CrowellTran:1606.04880}
Robert~Alexander Crowell and Ngoc~Mai Tran, \emph{Tropical geometry and
  mechanism design}, 2016, Preprint \arXiv{1606.04880}.

\bibitem{Triangulations}
Jes{\'u}s~A. De~Loera, J{\"o}rg Rambau, and Francisco Santos,
  \emph{Triangulations}, Algorithms and Computation in Mathematics, vol.~25,
  Springer-Verlag, Berlin, 2010, Structures for algorithms and applications.
  \MR{2743368 (2011j:52037)}

\bibitem{DevelinSturmfels04}
Mike Develin and Bernd Sturmfels, \emph{Tropical convexity}, Doc. Math.
  \textbf{9} (2004), 1--27 (electronic), correction: ibid., pp.\ 205--206.
  \MR{MR2054977 (2005i:52010)}

\bibitem{DochtermannJoswigSanyal:2012}
Anton Dochtermann, Michael Joswig, and Raman Sanyal, \emph{Tropical types and
  associated cellular resolutions}, J. Algebra \textbf{356} (2012), 304--324.
  \MR{2891135}

\bibitem{EinsiedlerKapranovLind06}
Manfred Einsiedler, Mikhail Kapranov, and Douglas Lind, \emph{Non-{A}rchimedean
  amoebas and tropical varieties}, J. Reine Angew. Math. \textbf{601} (2006),
  139--157. \MR{MR2289207 (2007k:14038)}

\bibitem{FinkRincon:2015}
Alex Fink and Felipe Rinc{\'o}n, \emph{Stiefel tropical linear spaces}, J.\
  Combin.\ Theory Ser.\ A \textbf{135} (2015), 291--331. \MR{3366480}

\bibitem{DMV:polymake}
Ewgenij Gawrilow and Michael Joswig, \emph{polymake: a framework for analyzing
  convex polytopes}, Polytopes---combinatorics and computation (Oberwolfach,
  1997), DMV Sem., vol.~29, Birkh\"auser, Basel, 2000, pp.~43--73.
  \MR{MR1785292 (2001f:52033)}

\bibitem{atint}
Simon Hampe, \emph{{\tt a-tint}: a polymake extension for algorithmic tropical
  intersection theory}, European J. Combin. \textbf{36} (2014), 579--607.
  \MR{3131916}

\bibitem{HuberRambauSantos00}
Birkett Huber, J{\"o}rg Rambau, and Francisco Santos, \emph{The {C}ayley trick,
  lifting subdivisions and the {B}ohne-{D}ress theorem on zonotopal tilings},
  J. Eur. Math. Soc. (JEMS) \textbf{2} (2000), no.~2, 179--198. \MR{MR1763304
  (2001i:52017)}

\bibitem{Jensen:1601.02818}
Anders~Nedergaard Jensen, \emph{Tropical homotopy continuation}, 2016, Preprint
  \arXiv{1601.02818}.

\bibitem{ETC}
Michael Joswig, \emph{Essentials of tropical combinatorics}, Springer, to
  appear, Draft of a book, available a \url{www.math.tu-berlin.de/~joswig/etc}.

\bibitem{JoswigLoho:1503.04707}
Michael Joswig and Georg Loho, \emph{Weighted digraphs and tropical cones},
  Linear Algebra Appl. \textbf{501} (2016), 304--343.

\bibitem{JoswigSturmfelsYu:2007}
Michael Joswig, Bernd Sturmfels, and Josephine Yu, \emph{Affine buildings and
  tropical convexity}, Albanian J. Math. \textbf{1} (2007), no.~4, 187--211.
  \MR{MR2367213 (2008k:20063)}

\bibitem{Tropical+Book}
Diane Maclagan and Bernd Sturmfels, \emph{Introduction to tropical geometry},
  Graduate Studies in Mathematics, vol. 161, American Mathematical Society,
  Providence, RI, 2015. \MR{3287221}

\bibitem{OrlikTerao:1992}
Peter Orlik and Hiroaki Terao, \emph{Arrangements of hyperplanes}, Grundlehren
  der Mathematischen Wissenschaften [Fundamental Principles of Mathematical
  Sciences], vol. 300, Springer-Verlag, Berlin, 1992. \MR{1217488}

\bibitem{YoungZamir:2014}
Shmuel~Zamir Petyon~Young, \emph{Handbook of game theory}, Handbooks in
  Economics, vol.~4, North Holland, 2014.

\bibitem{Shiozawa:2015}
Yoshinori Shiozawa, \emph{International trade theory and exotic algebras},
  Evolut. Inst. Econ. Rev. \textbf{12} (2015), no.~1, 177--212.

\bibitem{Sturmfels:1994}
Bernd Sturmfels, \emph{On the {N}ewton polytope of the resultant}, J. Algebraic
  Combin. \textbf{3} (1994), no.~2, 207--236. \MR{1268576}

\bibitem{TranYu:1505.05737}
Ngoc~Mai Tran and Josephine Yu, \emph{Product-mix auctions and tropical
  geometry}, 2015, Preprint \arXiv{1505.05737}.

\end{thebibliography}

\end{document}